\newtheorem{theorem}{Theorem}[section]
\newtheorem{thm}[theorem]{Theorem}
\newtheorem{lemma}[theorem]{Lemma}
\newtheorem{lem}[theorem]{Lemma}
\newtheorem{prop}[theorem]{Proposition}
\newtheorem{cor}[theorem]{Corollary}
\newtheorem{conjecture}[theorem]{Conjecture}
\theoremstyle{remark}
\newtheorem{remark}{Remark}[section]
\theoremstyle{definition}
\newtheorem{example}[theorem]{Example}
\def\image{{\rm image}}
\def\e{{\mathfrak e}}
\def\eb{\mathfrak{eb}}
\def\ec{\mathfrak{ec}}
\def\i{{\mathbf i}}
\def\j{{\mathbf j}}
\def\Z{\mathbb{Z}}
\def\P{{\mathcal P}}
\def\ep{\varepsilon}
\def\R{{\mathbb R}}
\def\el{\mathfrak{el}}
\def\ssl{\mathfrak{sl}}
\def\sp{\mathfrak{sp}}
\def\gl{\mathfrak{gl}}
\def\i{{\mathbf i}}
\begin{document}

\author{Thomas Lam}
\email{tfylam@umich.edu}
\address{Department of Mathematics,
University of Michigan, 530 Church St., Ann Arbor, MI 48109 USA}
\author{Pavlo Pylyavskyy}
\email{ppylyavs@umn.edu}
\address{Department of Mathematics,
University of Minnesota, 206 Church St. SE, Minneapolis, MN 55455 USA}
\thanks{T.L. was partially supported by NSF grant DMS-0901111, and by a Sloan Fellowship.}
\thanks{P.P. was partially
supported by NSF grant DMS-0757165.}

\title{Electrical networks and Lie theory}

\begin{abstract}
We introduce a new class of ``electrical'' Lie groups.  These Lie groups, or more precisely their nonnegative parts, act on the space of planar electrical networks
via combinatorial operations previously studied by Curtis-Ingerman-Morrow.  The corresponding electrical Lie algebras 
are obtained by deforming the Serre relations of a semisimple Lie algebra in a way suggested by the star-triangle transformation of electrical networks.  Rather surprisingly, we 
show that the type $A$ electrical Lie group is isomorphic to the symplectic group.  The electrically nonnegative part $(EL_{2n})_{\geq 0}$ of the electrical Lie group is an analogue of the totally nonnegative subsemigroup $(U_{n})_{\geq 0}$ of the unipotent subgroup of $SL_{n}$.  We establish decomposition and parametrization results for 
$(EL_{2n})_{\geq 0}$, paralleling Lusztig's work in total nonnegativity, and work of Curtis-Ingerman-Morrow and de Verdi\`{e}re-Gitler-Vertigan for networks.   Finally, we suggest a generalization of electrical Lie algebras to all Dynkin types. 
\end{abstract}
\maketitle

\section{Introduction}
In this paper we consider the simplest of electrical networks, namely those that consist of only resistors.  The electrical properties of such a network $N$ are completely described 
by the {\it response matrix} $L(N)$, which computes the current that flows through the network when certain voltages are fixed at the boundary vertices of $N$.  
The study of the response matrices of {\it planar} electrical networks has led to a robust theory, see Curtis-Ingerman-Morrow \cite{CIM}, or de Verdi\`{e}re-Gitler-Vertigan \cite{dVGV}. 
 In 1899, Kennelly \cite{Ken} described a local transformation (see Figure \ref{fig:elec11}) of a network, called the {\it star-triangle} or {\it $Y-\Delta$} transformation, which preserves the response matrix of a network.  This transformation is one of the many places where a Yang-Baxter style transformation occurs in mathematics or physics.  

\medskip

Curtis-Ingerman-Morrow \cite{CIM} studied the operations of {\it adjoining a boundary spike} and {\it adjoining a boundary edge} to (planar) electrical networks.  
Our point of departure is to consider these operations as one-parameter subgroups of a Lie group action.  The star-triangle transformation then leads to an ``electrical Serre relation'' in the corresponding Lie algebra, which turns out to be a deformation of the Chevalley-Serre relation for $\ssl_{n}$:
$$
\text{Serre relation:}\; [e,[e,e']] = 0  \qquad \text{electrical Serre relation:} \;[e,[e,e']] = -2e
$$
The corresponding one-parameter subgroups satisfy a Yang-Baxter style relation which is a deformation of Lusztig's relation in total positivity:
\begin{align*}
\text{Lusztig's relation:} &\;u_i(a) u_j(b) u_i(c) = u_j({bc}/({a+c})) u_i(a+c) u_j({ab}/({a+c})) \\
\text{electrical relation:} &\; u_i(a) u_j(b) u_i(c) = u_j({bc}/({a+c+abc})) u_i(a+c+abc) u_j({ab}/({a+c+abc})).
\end{align*}
We study in detail the Lie algebra $\el_{2n}$ with $2n$ generators satisfying the electrical Serre relation.  An (electrically) nonnegative part $(EL_{2n})_{\geq 0}$ of the 
corresponding Lie group $EL_{2n}$ acts on planar electrical networks with $n+1$ boundary vertices, and one obtains a dense subset of all response matrices of planar networks 
in this way.  This nonnegative part is a rather precise analogue of the totally nonnegative subsemigroup $(U_{2n+1})_{\geq 0}$ of the unipotent subgroup of $SL_{2n+1}$, studied in Lie-theoretic terms by Lusztig \cite{Lus}.  We show (Proposition \ref{P:decomp}) that $(EL_{2n})_{\geq 0}$ has a cell decomposition labeled by permutations $w \in S_{2n+1}$, precisely paralleling one of Lusztig's results for $(U_{2n})_{\geq 0}$ and reminiscent of the Bruhat decomposition.  This can be considered an algebraic analogue of parametrization results in the theory of electrical networks \cite{CIM,dVGV}.  Surprisingly, $EL_{2n}$ itself is isomorphic to the symplectic Lie group $Sp_{2n}(\R)$ (Theorem \ref{thm:elsp}). This semisimplicity does not in general hold for electrical Lie groups: $EL_{2n+1}$ is not semisimple.  We also describe (Theorem \ref{thm:stab}) the stabilizer lie algebra of the infinitesimal action of $\el_{2n}$ on electrical networks.  We caution that the nonnegative part $(EL_{2n})_{\geq 0}$ is distinct from Lusztig's totally nonnegative part $Sp_{2n}(\R)_{\geq 0}$ of the symplectic group.

While we focus on the planar case in this paper, we shall connect the results of this paper to the inverse problem for electrical networks on cylinders in a future paper.
% \cite{LP}.
There we borrow ideas from representation theory such as that of crystals and $R$-matrices.

\medskip
One obtains the types $B$ and $G$ electrical Serre relations by the standard technique of ``folding'' the type $A$ electrical Serre relation.  
These lead to a new species of electrical Lie algebras $\e_D$ defined for any Dynkin diagram $D$.  Besides the above results for type $A$, there appear to be other interesting 
relations between these $\e_D$ and simple Lie algebras.  For example, $\eb_2 := \e_{B_2}$ is isomorphic to $\gl_2$.  We conjecture (Conjecture \ref{con:1}) that for a finite 
type diagram $D$, the dimension $\dim(\e_D)$ is always equal to the dimension of the maximal nilpotent subalgebra of the semisimple Lie algebra ${\mathfrak g}_D$ with Dynkin 
diagram $D$, and furthermore (Conjecture \ref{con:2}) that $\e_D$ is finite dimensional if and only if ${\mathfrak g}_D$ is finite-dimensional.  We give the electrical analogue 
of Lusztig's relation for type $B$ (see Berenstein-Zelevinsky \cite{BZ}) where we observe similar positivity to the totally nonnegative case. 

The most interesting case beyond finite Dynkin types is affine type $A$. The corresponding electrical Lie (semi)-group action on planar electrical networks is perhaps even more natural than that of $EL_{2n}$, since one can obtain {\it {all}} (rather than just a dense subset of, see Corollary \ref{cor:dense}) response matrices of planar networks in this way (\cite{dVGV, CIM}), and furthermore the circular symmetry of the planar networks is preserved.  We however do not attempt to address this case in the current paper. 

\section{Electrical networks}
For more background on electrical networks, we refer the reader to \cite{CIM, dVGV}.
\subsection{Response matrix}
For our purposes, an electrical network is a finite weighted undirected graph $N$, where the vertex set is divided into the {\it boundary} vertices and the {\it interior} vertices.  The weight $w(e)$ of an edge is to be thought of as the conductance of the corresponding resistor, and is generally taken to be a positive real number.  A $0$-weighted edge would same as having no edge, and an edge with infinite weight would be the same as identifying the endpoint vertices.  %We do not allow loops in our electrical networks.

We define the {\it Kirchhoff matrix} $K(N)$ to be the square matrix with rows and columns labeled by the vertices of $N$ as follows:
$$
K_{ij} = \begin{cases} -\sum_{e \; \text{joins $i$ and $j$}} w(e) &\mbox{for $i \neq j$} \\
\sum_{e \; \text{incident to $i$}} w(e) &\mbox{for $i = j$.}
\end{cases}
$$

We define the {\it response matrix} $L(N)$ to be the square matrix with rows and columns labeled by the boundary vertices of $N$, given by the Schur-complement
$$
L(N) = K/K_I
$$
where $K_I$ denotes the submatrix of $K$ indexed by the interior vertices.  The response matrix encodes all the electrical properties of $N$ that can be measured from the boundary vertices.  For example, the Kirchhoff matrix and response matrix of the ``Y"-graph in Figure \ref{fig:elec11} (with the surrounding edges removed and the central vertex made interior), are given by
$$
K(N) = \begin{bmatrix}a&0&0&-a \\ 0&b&0&-b \\ 0&0&c&-c \\ a &b&c&a+b+c\end{bmatrix} \qquad \text{and} \qquad
L(N) = \begin{bmatrix}
 \frac{a (b+c)}{a+b+c} & -\frac{a b}{a+b+c} & -\frac{a c}{a+b+c} \\
 -\frac{a b}{a+b+c} & \frac{b (a+c)}{a+b+c} & -\frac{b c}{a+b+c} \\
 -\frac{a c}{a+b+c} & -\frac{b c}{a+b+c} & \frac{(a+b) c}{a+b+c} \\
\end{bmatrix}.
$$

\subsection{Planar electrical networks}
We shall usually consider electrical networks $N$ embedded into a disk, so that the boundary vertices, numbered $1,2,\ldots,n+1$, lie on the boundary of the disk.  
\begin{figure}[h!]
    \begin{center}
    \input{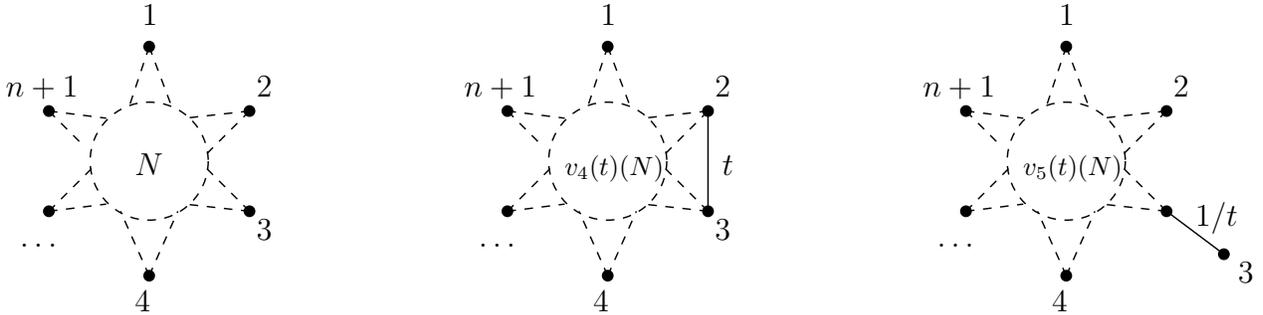}
    \end{center}
    \caption{The operations $v_i(t)$ acting on a network $N$.}
    \label{fig:eLie3}
\end{figure}
Given an odd integer $2k-1$, for $k = 1,2,\ldots,n+1$, and a nonnegative real number $t$, we define $v_{2k-1}(t)(N)$ to be the electrical network obtained from $N$ by adding a new edge from a new vertex $v$ to $k$, with weight $1/t$, followed by treating $k$ as an interior vertex, and the new vertex $v$ as a boundary vertex (now named $k$).

Given an even integer $2k$, for $k = 1,2,\ldots,n+1$, and a nonnegative real number $t$, we define $v_{2k}(t)(N)$ to be the electrical network obtained from $N$ by adding a new edge 
from $k$ to $k+1$ (indices taken modulo $n+1$), with weight $t$.

The two operations are shown in Figure \ref{fig:eLie3}. In \cite{CIM}, these operations are called {\it adjoining a boundary spike}, and {\it adjoining a boundary edge} respectively.  
Our notation suggests, and we shall establish, that there is some symmetry between these two types of operations.  

In \cite[Section 8]{CIM}, it is shown that $L(v_i(a) \cdot N)$ depends only on $L(N)$, giving an operation $v_i(t)$ on response matrices.  Denote by $x_{ij}$  the entries of the response matrix, where 
$1 \leq i , j \leq n+1$. In particular, we have $x_{ij} = x_{ji}$. Then if $\delta_{ij}$ denotes the Kronecker delta, we have 
\begin{align} \label{eq:cim}
v_{2k-1}(t): x_{ij} \mapsto x_{ij} -\frac{t x_{ik} x_{kj}}{tx_{kk}+1} \;\;\; \text{and} \;\;\;  v_{2k}(t): x_{ij} \mapsto x_{ij} + (\delta_{ik}-\delta_{(i+1)k}) (\delta_{jk}-\delta_{(j+1)k}) t.
\end{align}
We caution that the parameter $\xi$ in \cite{CIM} is the inverse of our $t$ in the odd case.

\subsection{Electrically-equivalent transformations of networks} \label{ss:trans}
\label{sec:trans}

Series-parallel transformations are shown in Figure \ref{fig:eLie1}. The following proposition is well-known and can be found for example in \cite{dVGV}.

\begin{figure}[h!]
    \begin{center}
    \input{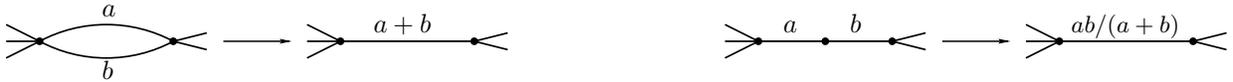}
    \end{center}
    \caption{Series-parallel transformations.}
    \label{fig:eLie1}
\end{figure}

\begin{prop}\label{P:SP}
Series-parallel transformations, removing loops, and removing interior degree 1 vertices, do not change the response matrix of a network.
\end{prop}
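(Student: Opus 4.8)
The plan is to reduce the whole statement to one local computation: the effect on the Schur complement $K/K_I$ of eliminating a single interior vertex. The device that makes this reduction work is the transitivity (``quotient'') property of Schur complements: if $w$ is an interior vertex of $N$, then
\[
L(N)=K/K_I=\bigl(K/K_{\{w\}}\bigr)\big/\bigl(K/K_{\{w\}}\bigr)_{I\setminus\{w\}},
\]
where $K/K_{\{w\}}=K_{\hat w}-(K_{ww})^{-1}\,K_{\hat w,w}\,K_{w,\hat w}$ is the matrix indexed by all vertices of $N$ except $w$ (the inverse exists since every conductance, hence $K_{ww}$, is positive). Thus interior vertices may be eliminated one at a time, in any order, and it suffices to check for each of the four operations that it carries $K(N)$ to a matrix with the same response matrix.

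First I would dispose of the two operations that do not involve an interior vertex. Replacing two parallel edges of conductances $a$ and $b$ between $u$ and $v$ by a single edge of conductance $a+b$ does not change $K(N)$ at all, since the entries $K_{uu}$, $K_{vv}$, $K_{uv}=K_{vu}$ see only the sum $a+b$; hence $L(N)$ is unchanged. A loop at a vertex $i$ contributes nothing to $K(N)$ (a loop carries no current, its endpoints being always at equal potential), so deleting it leaves $K(N)$, and hence $L(N)$, unchanged.

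The content is in the series move and in the removal of an interior degree-$1$ vertex, both of which I would treat by a single rank-one computation. Let $w$ be an interior vertex of degree $2$, joined to $u$ with conductance $a$ and to $v$ with conductance $b$. Then $K_{ww}=a+b$, the column $K_{\hat w,w}$ has entry $-a$ at $u$, entry $-b$ at $v$, and zeros elsewhere, so the rank-one correction $(K_{ww})^{-1}K_{\hat w,w}K_{w,\hat w}$ has entries $\tfrac{a^2}{a+b}$, $\tfrac{ab}{a+b}$, $\tfrac{ab}{a+b}$, $\tfrac{b^2}{a+b}$ in positions $(u,u),(u,v),(v,u),(v,v)$ and zeros elsewhere. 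Subtracting it from $K_{\hat w}$ turns the term $a$ in the $(u,u)$ entry (coming from the edge $uw$) into $a-\tfrac{a^2}{a+b}=\tfrac{ab}{a+b}$, turns the term $b$ in the $(v,v)$ entry into $\tfrac{ab}{a+b}$, and adds $-\tfrac{ab}{a+b}$ to the $(u,v)$ and $(v,u)$ entries; in other words $K/K_{\{w\}}$ is precisely the Kirchhoff matrix of the network $N'$ obtained from $N$ by deleting $w$ and joining $u$ to $v$ by an edge of conductance $\tfrac{ab}{a+b}$, in parallel with any pre-existing $u$--$v$ edge. The degree-$1$ case is the degeneration in which the edge $wv$ is absent: there $K_{ww}=a$, the correction is just $a$ in position $(u,u)$, and $K/K_{\{w\}}$ is the Kirchhoff matrix of $N$ with $w$ and the edge $uw$ deleted. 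In either case the transitivity identity shows that eliminating $w$ first computes $L(N)$, so $L(N)=L(N')$, and any parallel $u$--$v$ edges created in $N'$ may then be merged by the case already settled.

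The proposition is elementary and I expect no serious obstacle; the points to be careful about are the bookkeeping of the rank-one update on the diagonal (checking that the conductance $a$ of a deleted spoke is replaced by $\tfrac{ab}{a+b}$ at \emph{both} of $u$ and $v$), the loop convention, and the fact that a transformed network can acquire parallel edges, which is harmless because the parallel move is already handled. I would also note a shorter coordinate-free argument: for a boundary voltage vector $\phi$, $L(N)\phi$ is the vector of boundary currents of the unique function on the vertices that restricts to $\phi$ on the boundary and is harmonic at every interior vertex, and each of the four operations manifestly preserves this function; the Schur-complement version, however, has the advantage of matching the definition of $L(N)$ used here verbatim.
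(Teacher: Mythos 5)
Your argument is correct, but it is worth knowing that the paper itself offers no proof of this proposition: it is stated as well known, with a pointer to the literature (de Verdi\`{e}re--Gitler--Vertigan), so there is no ``paper proof'' to match. What you supply is a complete, self-contained verification directly from the paper's definition $L(N)=K/K_I$: the Crabtree--Haynsworth quotient property of Schur complements reduces everything to eliminating one interior vertex, and your rank-one computation correctly shows that eliminating a degree-$2$ interior vertex replaces the two spokes $a,b$ by a single edge of conductance $ab/(a+b)$ (and the degree-$1$ case just deletes the pendant edge), while the parallel and loop moves do not alter $K$ at all. This buys something the citation does not: the argument works verbatim with the Schur-complement definition used in the paper, handles the order-independence of interior eliminations, and deals cleanly with the parallel edges created by a series reduction. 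Two small points you already half-acknowledge and should state explicitly if you write this up: the paper's literal definition of $K_{ii}$ (``sum over edges incident to $i$'') must be read so that loops contribute nothing to the diagonal --- otherwise the loop-removal claim would be false for a loop at a boundary vertex --- and the middle vertex in the series move must be interior of degree exactly $2$; also, invertibility of $K_I$ (implicit in the definition of $L(N)$) is what licenses the quotient identity, with $K_{ww}>0$ automatic in the cases you use. Your coordinate-free remark via harmonic extension is the standard alternative route and is equally valid.
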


%\subsection{$Y-\Delta$ transformation}

\begin{figure}[h!]
    \begin{center}
    \input{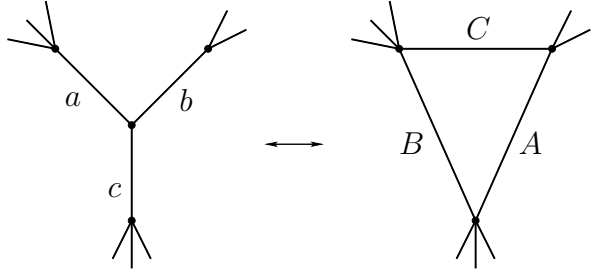}
    \end{center}
    \caption{The $Y-\Delta$, or star-triangle, transformation.}
    \label{fig:elec11}
\end{figure}

The following theorem is attributed to Kennelly \cite{Ken}.

\begin{prop}[$Y-\Delta$ transformation] \label{P:YDelta}
 Assume that parameters $a$,$b$,$c$ and $A$,$B$,$C$ are related by 
$$A = \frac{bc}{a+b+c}, \;\; B = \frac{ac}{a+b+c}, \;\; C= \frac{ab}{a+b+c},$$
or equivalently by 
$$a = \frac{AB+AC+BC}{A}, \;\; b= \frac{AB+AC+BC}{B}, \;\; c = \frac{AB+AC+BC}{C}.$$
Then switching a local part of an electrical network between the two options shown in Figure \ref{fig:elec11} does not change the response matrix of the whole network.
\end{prop}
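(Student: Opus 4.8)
The plan is to split the proof into a purely local $3$-terminal linear-algebra computation and a globalization step that propagates the local equality of response matrices to the whole network. The only computation is the local one, and everything else is bookkeeping with Schur complements.

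\textbf{Local computation.} First I would verify the statement for the two networks $Y$ and $\Delta$ by themselves, viewed as networks whose boundary is exactly the three terminal vertices $1,2,3$ at which the local picture is attached to the rest of $N$, and where $Y$ has one additional interior vertex $o$ (the center). For $\Delta$ there is nothing to do: it has no interior vertices, so $L(\Delta)=K(\Delta)$ is the explicit $3\times 3$ Laplacian with off-diagonal entries $-A,-B,-C$. For $Y$, order the vertices $1,2,3,o$; matching $a,b,c$ to the edges $o1,o2,o3$ and writing $s=a+b+c$, the $(o,o)$ entry of $K(Y)$ is $s$ and the $o$-column is $(-a,-b,-c)^{T}$, so the Schur complement $K(Y)/K(Y)_{\{o\}}$ is
$$L(Y)\;=\;\mathrm{diag}(a,b,c)\;-\;\tfrac1s\,(-a,-b,-c)^{T}(-a,-b,-c).$$
Reading off entries and using $a-a^{2}/s=a(b+c)/s$, etc., one gets exactly the Laplacian $L(\Delta)$ under the substitution $A=bc/s,\ B=ac/s,\ C=ab/s$; the ``equivalent'' relations $a=(AB+AC+BC)/A$, etc., are simply the solution of this linear system for $a,b,c$ (and are well defined since $A,B,C>0$).

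\textbf{Globalization.} Let $N$ contain the configuration $S\in\{Y,\Delta\}$ attached to the remaining network $N_{0}$ at the vertices $1,2,3$ (which may be boundary or interior vertices of $N$), and let $N'$ be obtained by replacing $S$ with the other configuration. Since parallel conductances add, the Kirchhoff matrix decomposes additively, $K(N)=\widehat K(N_{0})+\widehat K(S)$, where $\widehat{\ \cdot\ }$ denotes padding by zero rows/columns so that everything is indexed by the vertex set of $N$, and $\widehat K(S)$ is supported on $\{1,2,3\}$ together with $o$ when $S=Y$. Now use the transitivity of Schur complements (the quotient formula of Crabtree--Haynsworth): $L(N)=K(N)/K(N)_{I}$ may be computed by eliminating the interior vertices of $N$ in any order, so eliminate $o$ first when $S=Y$. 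Because the $o$-row and $o$-column of $K(N)$ come entirely from $\widehat K(S)$, this first elimination alters only the $\{1,2,3\}$-block, replacing $\widehat K(S)$ by $\widehat L(S)$ and leaving $\widehat K(N_{0})$ untouched; that is, $K(N)/K(N)_{\{o\}}=\widehat K(N_{0})+\widehat L(S)$, and by the local computation this matrix is the same for $S=Y$ and $S=\Delta$ (for $S=\Delta$ there is no elimination and $\widehat L(\Delta)=\widehat K(\Delta)$). Continuing to eliminate the remaining interior vertices of $N$ then produces the same matrix in both cases, so $L(N)=L(N')$.

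\textbf{Main obstacle.} The substance is concentrated in the globalization step, and the one point requiring care is the Schur-complement bookkeeping: that the partial elimination of $o$ can be performed first and interacts with exactly the $\{1,2,3\}$-block, and that this is insensitive to whether $1,2,3$ are boundary or interior vertices of the ambient $N$. Once that quotient identity is set up cleanly, no further difficulty arises; positivity of the conductances is needed only to keep $s=a+b+c$ (and $A,B,C$) nonzero so that the Schur complements exist.
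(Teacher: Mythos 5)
Your proof is correct. Note that the paper itself offers no argument here: Proposition \ref{P:YDelta} is attributed to Kennelly \cite{Ken} and used as a known input, so there is nothing to compare step-by-step. Your two-stage argument is the standard rigorous way to prove it and it checks out: the local computation is right (eliminating the center $o$ of the $Y$ gives the Schur complement $\mathrm{diag}(a,b,c)-\tfrac1s(-a,-b,-c)^T(-a,-b,-c)$ with $s=a+b+c$, whose off-diagonal entries $-ab/s$, $-ac/s$, $-bc/s$ match the $\Delta$ Laplacian with conductances $C$, $B$, $A$ on the appropriate edges, and $AB+AC+BC=abc/s$ gives the inverse formulas), and the globalization correctly exploits the two facts that make such local-to-global statements work: additivity of the Kirchhoff matrix over edges, and the Crabtree--Haynsworth quotient property allowing the interior vertices to be eliminated in any order, with the $o$-elimination touching only the $\{1,2,3\}$-block since $o$ is adjacent only to those vertices. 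Your closing remark about why the pivots are nonzero is the right thing to flag: $K_{oo}=a+b+c>0$ handles the new pivot, and for the remaining interior block one needs the usual standing hypothesis (implicit in the paper's definition of $L(N)$ via $K/K_I$) that this block is invertible, which holds for positive conductances whenever each interior component reaches the boundary; since eliminating $o$ first leaves the remaining interior elimination literally identical for $N$ and $N'$, no further argument is needed. So your proposal supplies a complete, self-contained proof of a statement the paper only cites.
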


\section{The electrical Lie algebra $\el_{2n}$}
\label{sec:el}
Let $\el_{2n}$ be the Lie algebra generated over $\R$ by $e_i$, $i=1, \ldots, 2n$ subject to relations 
\begin{itemize}
 \item $[e_i, e_j] = 0$ for $|i-j|>1$;
 \item $[e_i, [e_i, e_j]] = -2 e_i$, $|i-j|=1$.
\end{itemize}

\begin{theorem} \label{thm:elsp}
The Lie algebra $\el_{2n}$ is isomorphic to the real symplectic Lie algebra $\sp_{2n}$.
\end{theorem}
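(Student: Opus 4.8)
The strategy is threefold: (i) bound $\dim\el_{2n}$ from above by $\dim\sp_{2n}=n(2n+1)$; (ii) construct a Lie algebra homomorphism $\phi\colon\el_{2n}\to\sp_{2n}$; (iii) show $\phi$ is surjective. Since a surjection between real vector spaces of equal finite dimension is bijective, (i)--(iii) yield the theorem.

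For (i), filter $\el_{2n}$ by bracket length: $F_1=\mathrm{span}(e_1,\dots,e_{2n})$ and $F_k=F_{k-1}+[F_1,F_{k-1}]$, and form the associated graded Lie algebra $\mathrm{gr}\,\el_{2n}$, which is generated in degree $1$. The relations $[e_i,e_j]=0$ for $|i-j|>1$ pass to $\mathrm{gr}$ verbatim, and since the right-hand side $-2e_i$ of the electrical Serre relation lies in $F_1$ — lower in the filtration than the degree-$3$ bracket $[e_i,[e_i,e_j]]$ — in $\mathrm{gr}$ one recovers the ordinary Serre relation $[e_i,[e_i,e_j]]=0$. Hence $\mathrm{gr}\,\el_{2n}$ satisfies the defining relations of the maximal nilpotent subalgebra $\mathfrak n^{+}\subset\ssl_{2n+1}$, so by Serre's presentation theorem it is a quotient of $\mathfrak n^+$, whence
$$\dim\el_{2n}=\dim\mathrm{gr}\,\el_{2n}\le\dim\mathfrak n^{+}=\binom{2n+1}{2}=n(2n+1)=\dim\sp_{2n}.$$
In particular $\el_{2n}$ is finite dimensional.

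For (ii), the action of the one-parameter groups $v_i(t)$ on response matrices in \eqref{eq:cim} is the blueprint. After imposing the Kirchhoff row-sum relation, a response matrix becomes a self-adjoint operator on $W:=\mathbf 1^{\perp}\subset\R^{n+1}$ (with the induced inner product), i.e.\ a point in the standard affine chart of the Lagrangian Grassmannian of $W\oplus W^{*}$; using the Sherman--Morrison identity for the odd operations $v_{2k-1}(t)$ and a direct check for the even ones, one sees that all the $v_i(t)$ are restrictions of the natural $Sp_{2n}$-action on this chart. This dictates the definition of $\phi$: in $\sp_{2n}=\sp(W\oplus W^{*})$, let $\phi(e_{2k})$ be a rank-one symmetric element of the ``upper'' ($\Sym^2W$) block and $\phi(e_{2k-1})$ a rank-one symmetric element of the ``lower'' ($\Sym^2W^{*}$) block, built from two families of vectors of $W$ whose pairing matrix is bidiagonal with entries in $\{0,\pm1\}$. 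Then $[e_i,e_j]=0$ for $|i-j|>1$ becomes the vanishing of the corresponding pairing (the bidiagonality), while $[e_i,[e_i,e_j]]=-2e_i$ for $|i-j|=1$ reduces, by a short rank-one matrix computation, to the square of the relevant pairing equalling $1$; both hold, so $\phi$ is a well-defined homomorphism.

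For (iii), set $\mathfrak h:=\phi(\el_{2n})\subseteq\sp_{2n}$. Brackets of an ``upper'' generator with a ``lower'' generator land in the $\gl(W)$-block, and because the interaction pattern of the generators is a connected path, iterating such brackets produces all of $\Sym^2W$, all of $\Sym^2W^{*}$, and all of $\gl(W)$ — a spanning set for $\sp_{2n}$. Hence $\dim\mathfrak h\ge n(2n+1)$, and together with $\dim\mathfrak h\le\dim\el_{2n}\le n(2n+1)$ from (i) we get equality throughout, so $\mathfrak h=\sp_{2n}$ and $\phi$ is an isomorphism. (Alternatively one can run an induction on $n$: the subalgebra generated by $\phi(e_1),\dots,\phi(e_{2n-2})$ is a copy of $\sp_{2n-2}$ by the inductive hypothesis and simplicity of $\sp_{2n-2}$, and the two new generators fill out the complementary root spaces.) The main obstacle is precisely this surjectivity together with the dimension matching: it is not clear a priori why the specific deformation $-2e_i$ of the Serre relation should collapse $\dim\el_{2n}$ to exactly $\dim\sp_{2n}$ — rather than something strictly smaller — nor why the outcome is semisimple, in contrast with $\el_{2n+1}$. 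The genuine work lies in pinning down matrices $\phi(e_i)\in\sp_{2n}$ that simultaneously satisfy the commuting and the deformed Serre relations, and in verifying that their iterated brackets exhaust $\sp_{2n}$.
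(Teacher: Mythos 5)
Your proposal is correct and follows essentially the same route as the paper: an upper bound $\dim\el_{2n}\le n(2n+1)$ obtained by comparing with the Serre presentation of the nilpotent subalgebra ${\mathfrak n}^+\subset\ssl_{2n+1}$ (your associated-graded argument is a repackaging of the paper's Lemma \ref{lem:dim}), an explicit rank-one block-matrix homomorphism $\phi\colon\el_{2n}\to\sp_{2n}$ (the paper takes $a_i=\ep_{i-1}+\ep_i$, $b_i=\ep_i$, which is exactly your bidiagonal pairing with entries in $\{0,1\}$), surjectivity by first generating the $\gl_n$ block from mixed brackets and then all remaining root spaces, and a dimension count to conclude injectivity. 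There is no gap worth flagging.
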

\begin{proof}
We identify the symplectic algebra $\sp_{2n}$ with the space of $2n \times 2n$ matrices which in block form
$$
\left(\begin{array}{cc} A & B \\ C & D \end{array} \right)
$$
satisfy $A = -D^T$, $B = B^T$ and $C = C^T$ (see \cite[Lecture 16]{FH}).  Note that the subLie algebra consisting of the matrices where $B = C = 0$ is naturally isomorphic to $\gl_n$.

Let $\ep_i \in \R^n$ denote the standard basis column vector with a $1$ in the $i$-th position.  Let $a_1 = \ep_1, a_2 = \ep_1 + \ep_2,a_3 = \ep_2 + \ep_3, \ldots,a_n = \ep_{n-1} + \ep_n$, and let $b_1 = \ep_1, b_2 = \ep_2,\ldots, b_n = \ep_n$.  For $1 \leq i \leq n$, define elements of $\sp_{2n}$ by the formulae
$$
e_{2i-1} = \left(\begin{array}{cc} 0 & a_i \cdot a_i^T \\ 0 & 0 \end{array} \right) \qquad e_{2i} = \left(\begin{array}{cc} 0 & 0 \\ b_i \cdot b_i^T & 0 \end{array} \right).
$$

We claim that this gives a symplectic representation $\phi$ of $\el_{2n}$ which is an isomorphism of $\el_{2n}$ with $\sp_{2n}$.  We first check that the relations of $\el_{2n}$ are satisfied.  It is clear from the block matrix form that $[e_{2i-1},e_{2j-1}]=0=[e_{2i},e_{2j}]$ for any $i,j$.  Now, 
$$
[e_{2i-1},e_{2j}] = \left(\begin{array}{cc}(a_i \cdot a_i^T) (b_j \cdot b_j^T) & 0\\ 0 & -(b_j \cdot b_j^T)(a_i \cdot a_i^T)   \end{array} \right).
$$
But by construction, $b_j^T \cdot a_i= 0 = a_i^T \cdot b_j$ unless $j = i$, or $j = i-1$.  Thus $[e_k,e_l] = 0$ unless $|k-l|= 1$.  Finally, 
$$
[e_{2i-1},[e_{2i-1},e_{2i}]] = \left(\begin{array}{cc}0 &-2(a_i \cdot a_i^T)(b_i \cdot b_i^T)(a_i \cdot a_i^T)\\ 0 & 0  \end{array} \right) = \left(\begin{array}{cc}0 &-2 a_i \cdot a_i^T\\ 0 & 0  \end{array} \right) = -2 e_{2i-1}
$$
using the fact that $a_i^T \cdot b_i = 1 = b_i^T \cdot a_i$.  Similarly, one obtains $[e_{k},[e_{k},e_{k \pm1}]]= -2e_k$.  Thus we have a symplectic representation $\phi: \el_{2n} \to \sp_{2n}$.  

Now we show that this map is surjective.  First we verify that the $
\gl_n \subset \phi(\el_{2n})$.  The non-zero commutators $[e_i,e_j]$ gives matrices of the form $\left(\begin{array}{cc} A & 0 \\ 0&-A^T \end{array}\right)$ where $A$ is a scalar multiple of one of the matrices $E_{1,1}$, or $E_{i,i}+E_{i+1,i}$ or $E_{i+1,i}+E_{i+1,i+1}$.  Here $E_{i,j}$ denotes the $n \times n$ matrix with a single non-zero entry equal to one in the $(i,j)$-th position.  All the matrices of the above form occur.  It is easy to see that $\phi(\el_{2n})$ must then contain the matrices where $A= E_{i,i}$, $E_{i,i+1}$, or $E_{i+1,i}$ for each $i$.  But these matrices generate $\gl_n$ as a Lie algebra. However, it is known \cite[Proposition 8.4(d)]{Hum} that for a semisimple Lie algebra $L$, one has $[L_{\alpha},L_{\beta}] = L_{\alpha+\beta}$ when $\alpha,\beta,\alpha+\beta$ are all roots, and $L_\alpha$ denotes a root subspace.  It follows easily from the explicit description of the root system of $\sp_{2n}$ and the definition of $e_i$ that every root subspace of $\sp_{2n}$ is contained in $\phi(\el_{2n})$, completing the proof.

% In particular, the standard Cartan subalgebra (consisting of the diagonal matrices) of $sp_{2n}$ lies in $\phi(el_{2n})$.  It follows that if $x \in \phi(el_{2n})$ and $x = x_0 + \sum_{\alpha} x_\alpha$ is the decomposition of $x$ into components lying in each root space, then $x_0, x_\alpha \in \phi(el_{2n})$.

To see that the map $\phi:\el_{2n} \to \sp_{2n}$ is injective, we note:
\begin{lem}\label{lem:dimA}
The dimension of $\el_{2n}$ is $n(2n+1)$.
\end{lem}

\noindent {\it Proof.}  According to Lemma \ref{lem:dim} the dimension of $\el_{2n}$ is at most $n(2n+1)$. On the other hand, we just saw that the map $\el_{2n} \to \sp_{2n}$ is surjective. The statement follows. 
\end{proof}

\section{The electrical Lie group $EL_{2n}$}
Let $EL_{2n}$ be a split real Lie group with Lie algebra $\el_{2n}$.  For concreteness, we shall choose $EL_{2n}$ to be the real symplectic group, but we will use the notation $EL_{2n}$ to remind the reader that the generators we consider are associated to the presentation of $\el_{2n}$, rather than the usual presentation of $\sp_{2n}$.  Let $u_i(a) = \exp(a e_i)$ denote the the one-parameter subgroups corresponding to the generators of $\el_{2n}$.  

\begin{theorem} \label{thm:rels}
 The elements $u_i(a)$ satisfy the relations
\begin{enumerate}
 \item $u_i(a) u_i(b) = u_i(a+b)$,
 \item $u_i(a) u_j(b) = u_j(b) u_i(a)$, if $|i-j|>1$;
 \item $u_i(a) u_j(b) u_i(c) = u_j({bc}/({a+c+abc})) u_i(a+c+abc) u_j({ab}/({a+c+abc}))$, if $|i-j|=1$.
\end{enumerate}
\end{theorem}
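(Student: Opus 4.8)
The plan is to verify the three relations by exploiting the isomorphism $\el_{2n}\cong\sp_{2n}$ from Theorem~\ref{thm:elsp} and the explicit $2n\times 2n$ matrix realization $\phi$ of the generators $e_i$. Relations (1) and (2) are immediate and I would dispatch them first: (1) is the standard one-parameter subgroup identity $\exp(ae_i)\exp(be_i)=\exp((a+b)e_i)$, and (2) follows since $[e_i,e_j]=0$ for $|i-j|>1$ implies that $e_i$ and $e_j$ span a commutative subalgebra, so their exponentials commute. (Alternatively, for (2) note that $\phi(e_i)$ and $\phi(e_j)$ are both strictly block-triangular, and the off-diagonal blocks $a_k a_k^T$, $b_l b_l^T$ involved are orthogonal when $|i-j|>1$, so the matrices $\phi(e_i)$ and $\phi(e_j)$ actually commute as matrices.)

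The substance is relation (3). I would reduce to the rank-two case: since $e_i,e_j$ with $|i-j|=1$ together with their brackets generate a subalgebra isomorphic to the electrical analogue of $\ssl_3$'s nilpotent part — explicitly a $3$-dimensional Lie algebra spanned by $e_i,e_j,[e_i,e_j]$ with $[e_i,[e_i,e_j]]=-2e_i$ and $[e_j,[e_j,e_i]]=-2e_j$ — it suffices to prove the identity inside this fixed $3$-dimensional Lie group, hence inside any faithful representation of it. Using $\phi$, I can take $e_i=\phi(e_i)$ and $e_j=\phi(e_j)$ to be concrete $2\times 2$ or $3\times 3$ matrices (after restricting to the relevant coordinate subspace): for instance, with the identifications above one can arrange $e = \begin{pmatrix} 0&1\\0&0\end{pmatrix}$-type and $f=\begin{pmatrix}0&0\\1&0\end{pmatrix}$-type blocks inside a small symplectic group, realizing the subalgebra faithfully in, say, $\sp_4$ or even $\mathfrak{sl}_2$-like $2\times 2$ matrices scaled appropriately. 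Then $u_i(a)=\exp(ae_i)$ and $u_j(b)=\exp(be_j)$ become explicit matrices with polynomial/rational entries in the parameters, and the identity
$$u_i(a)u_j(b)u_i(c)=u_j\!\left(\tfrac{bc}{a+c+abc}\right)u_i(a+c+abc)\,u_j\!\left(\tfrac{ab}{a+c+abc}\right)$$
becomes an equality of matrices that I verify by direct computation: multiply out both sides, compare entries, and check that the four scalar equations obtained are consistent and force exactly the substitution $a'=a+c+abc$, $b'=bc/(a'),$ $c''=ab/(a')$.

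The main obstacle is choosing the right small faithful representation so the computation stays clean — the naive $3$-dimensional adjoint-type representation has $\exp$ producing messy quadratic terms because $e_i$ is not nilpotent on it (indeed $\mathrm{ad}(e_i)$ has $[e_i,[e_i,e_j]]=-2e_i\neq 0$, so $e_i$ acts semisimply-plus-nilpotently and $\exp(ae_i)$ involves $\cosh,\sinh$-type functions). The cleanest route is to pick a $2$-dimensional representation where $e_i,e_j$ genuinely square to zero (nilpotent $2\times 2$ matrices), so that $u_i(a)=I+ae_i$ exactly; then the relation is a pure polynomial identity in $a,b,c$ that one checks in a couple of lines. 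I would need to confirm such a faithful (or at least relation-detecting) $2$-dimensional representation exists for the rank-two electrical algebra — it does, essentially because this algebra embeds in $2\times 2$ upper/lower triangular matrices via the symplectic picture restricted to a two-dimensional symplectic subspace — and then relation (3) drops out, with the remark that this is precisely the deformation of Lusztig's relation recording the $Y$--$\Delta$ transformation of Proposition~\ref{P:YDelta} with the third resistor normalized away.
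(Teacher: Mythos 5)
Your proposal is essentially the paper's proof: relations (1) and (2) are dispatched the same way, and for (3) the paper likewise reduces to the subalgebra spanned by $e_i$, $e_j$, $[e_i,e_j]$ --- which is precisely $\ssl_2$ with $e_i,e_j$ as the Chevalley generators --- and verifies the identity by multiplying $2\times 2$ unipotent matrices in $SL_2(\R)$. The two-dimensional representation with nilpotent images that you were seeking is exactly this standard one, so your route coincides with the paper's.
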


\begin{proof} The first two relations are clear. For the third, observe that for each $1 \leq i \leq 2i-1$, the elements $e_i$, $e_{i+1}$, and $[e_i,e_{i+1}]$ are the usual Chevalley generators of a Lie subalgebra isomorphic to $\ssl_2$.  Thus we can verify the relation inside $SL_2(\R)$:
$$\left(\begin{matrix} 1 & a\\ 0 &  1\\ \end{matrix} \right) \left(\begin{matrix} 1 & 0\\ b &  1\\ \end{matrix} \right) \left(\begin{matrix} 1 & c\\ 0 &  1\\ \end{matrix} \right) = 
\left(\begin{matrix} 1+ab & a+c+abc\\ b &  1+bc\\ \end{matrix} \right) =$$ 
$$\left(\begin{matrix} 1 & 0\\ {bc}/({a+c+abc}) &  1\\ \end{matrix} \right) \left(\begin{matrix} 1 & a+c+abc\\ 0 &  1\\ \end{matrix} \right) \left(\begin{matrix} 1 & 0\\ {ab}/({a+c+abc}) &  1\\ \end{matrix} \right).$$
\end{proof}

\begin{remark}
There is a one-parameter deformation which connects the relation Theorem \ref{thm:rels}(3) with Lusztig's relation \cite{Lus} in total positivity:
\begin{equation}\label{E:lus}
u_i(a) u_j(b) u_i(c) = u_j({bc}/({a+c})) u_i(a+c) u_j({ab}/({a+c})).
\end{equation}
Consider the associative algebra $U_{\tau}({\el}_{2n})$ where the generators $\ep_1,\ep_2,\ldots,\ep_{2n}$ satisfy the following deformed Serre relations:
\begin{itemize}
 \item $\ep_i \ep_j = \ep_j \ep_i$ if $|i-j|>1$;
 \item $\ep_i \ep_j \ep_i = \tau \ep_i + (\ep_i^2 \ep_j + \ep_j \ep_i^2)/2$, if $|i-j|=1$.
\end{itemize}
This is a one-parameter family of algebras which at $\tau=0$ reduces to $U({\mathfrak n^+})$, where ${\mathfrak sl}_{2n+1} = {\mathfrak n^-} \oplus {\mathfrak h} \oplus {\mathfrak n^+}$ 
is the Cartan 
decomposition, while at $\tau=1$ it gives the universal enveloping algebra $U({\mathfrak el}_{2n})$
of the electrical Lie algebra.  For $U_{\tau}({\el}_{2n})$, the ``braid move'' for the elements $\exp(a \ep_i)$
then takes the form 
$$u_i(a) u_j(b) u_i(c) = u_j({bc}/({a+c+\tau abc})) u_i(a+c+\tau abc) u_j({ab}/({a+c+\tau abc})) \;\;\; \text{if $|i-j|=1$}.$$
At  $\tau = 0$ this reduces to \eqref{E:lus}, and at $\tau=1$ it is the relation Theorem \ref{thm:rels}(3).
\end{remark}

\subsection{Nonnegative part of $EL_{2n}$}
Define the {\it {nonnegative}} part $(EL_{2n})_{\geq 0}$ of $EL_{2n}$, or equivalently the {\it {electrically nonnegative}} part of $Sp_{2n}$, to be the subsemigroup generated by the $u_i(a)$ with nonnegative parameters $a$.

For a reduced word $\i = {i_1} \dotsc {i_\ell}$ of $w \in S_{2n+1}$ denote by $E(w) \subset (EL_{2n})_{\geq 0}$ the image of the map 
$$
\psi_\i: (a_1, \ldots, a_\ell) \in \mathbb R_{>0}^\ell \mapsto u_{i_1}(a_1) \dotsc u_{i_\ell}(a_\ell).
$$
It follows from the relations in Theorem \ref{thm:rels} that the set $E(w)$ depends only on $w$, and not on the chosen reduced word.
The following proposition is similar to \cite[Proposition 2.7]{Lus} which gives a decomposition $U_{\geq 0} = \sqcup_w U^w_{\geq 0}$ of the totally nonnegative part of the unipotent group.

%\fixit{Shoudl this be a theorem}
\begin{prop}\label{P:decomp}
 The sets $E(w)$ are disjoint and cover the whole $(EL_{2n})_{\geq 0}$. Each of the maps $\psi_\i:\mathbb R_{>0}^\ell \to E(w)$ is a homeomorphism.
\end{prop}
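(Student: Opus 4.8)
The plan is to follow the standard template from Lusztig's theory of total nonnegativity, adapted to the electrical setting, using crucially the known structure of $EL_{2n} \cong Sp_{2n}(\R)$. First I would set up the group-theoretic framework: since each triple $(e_i, e_{i+1}, [e_i,e_{i+1}])$ spans a copy of $\ssl_2$, the subgroup generated by the $u_i(a)$ for all $a \in \R$ (not just $a \geq 0$) is a copy of a unipotent-type subgroup, and in fact the relations of Theorem~\ref{thm:rels} show that $\psi_\i$ for a reduced word $\i$ of the longest element $w_0 \in S_{2n+1}$ parametrizes a ``big cell'' inside $EL_{2n}$. The key observation is that the relations (1)--(3) are exactly the defining relations among the $u_i(a)$ in $SL_{2n+1}$ with $a$ allowed to be any real number, \emph{except} that the rational subtraction-free expressions appearing in relation (3) behave well only for nonnegative parameters. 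So the combinatorics of which products $u_{i_1}(a_1)\cdots u_{i_\ell}(a_\ell)$ can be rewritten into which others is governed entirely by the symmetric group $S_{2n+1}$, just as in Lusztig's case.

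The main steps, in order, would be: (i) \textbf{Well-definedness of $E(w)$.} Show that any two reduced words of $w$ are connected by braid and commutation moves, and that relations (2) and (3) of Theorem~\ref{thm:rels} realize these moves with the parameter substitutions being bijections of $\R_{>0}^\ell$ onto itself (the electrical substitution $ (a,b,c) \mapsto (bc/(a+c+abc),\, a+c+abc,\, ab/(a+c+abc))$ is easily checked to be an involutive homeomorphism of $\R_{>0}^3$, since $a+c+abc > 0$). This gives that $E(w)$ is independent of the reduced word, and since every element of $(EL_{2n})_{\geq 0}$ is by definition a product of finitely many $u_i(a_i)$ with $a_i \geq 0$, using relation (1) to merge adjacent equal indices and allowing $a_i = 0$ (which kills a factor) one sees that $(EL_{2n})_{\geq 0} = \bigcup_{w \in S_{2n+1}} E(w)$. (ii) \textbf{Injectivity of $\psi_\i$.} Realize $\el_{2n} \hookrightarrow \ssl_{2n+1}$ by sending $e_i$ to a positive linear combination of the Chevalley generator $f_i$ of $\ssl_{2n+1}$ — more precisely, one expects a substitution of the form $e_i \mapsto f_i$ composed with a suitable conjugation/rescaling so that $u_i(a)$ maps to the lower-triangular elementary matrix $y_i(a)$ (or a subtraction-free modification thereof). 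Under this embedding, the product $u_{i_1}(a_1)\cdots u_{i_\ell}(a_\ell)$ maps to $y_{i_1}(a_1)\cdots y_{i_\ell}(a_\ell)$, and Lusztig's / the Berenstein--Fomin--Zelevinsky result that such products over a reduced word are injective on $\R_{>0}^\ell$ (via Chamber Ansatz / generalized minors) gives injectivity of $\psi_\i$. Then continuity of $\psi_\i$ is clear, and its image $E(w)$ being a bijective continuous image of $\R_{>0}^\ell$; one shows $\psi_\i$ is a homeomorphism onto its image by exhibiting a continuous inverse, again via minors of the response matrix or via the explicit factorization algorithm. (iii) \textbf{Disjointness.} If $g \in E(w) \cap E(w')$, then from the parametrization by minors (or by looking at which one-parameter subgroups $g$ ``uses'') one reads off $w = w'$; equivalently, associate to $g$ the pair of opposite Borel-type data and use that the relative position is the honest Weyl group element $w$.

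The hardest part will be step (ii)--(iii): establishing injectivity and the homeomorphism claim honestly, because one cannot directly invoke Lusztig's theorem for $Sp_{2n}$ in the standard total-positivity sense — the $u_i(a)$ here are \emph{not} the usual Chevalley one-parameter subgroups of $Sp_{2n}$ (the generators $e_i$ are not Chevalley generators of $\sp_{2n}$; indeed there are $2n$ of them, matching $S_{2n+1}$, not $S_{2n+1}$'s own simple reflections count for $\sp_{2n}$). So the right strategy is to transfer everything into $SL_{2n+1}$: find an explicit (subtraction-free, bijective) change of parameters under which $u_i(a)$ corresponds to a product of ordinary elementary Jacobi matrices in $SL_{2n+1}$ in such a way that $E(w)$ maps homeomorphically onto Lusztig's cell $U^w_{>0}$ inside $(U_{2n+1})_{\geq 0}$. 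Once this dictionary is in place, Proposition~\ref{P:decomp} follows immediately from \cite[Proposition 2.7]{Lus}. An alternative, more self-contained route is to use the action \eqref{eq:cim} on response matrices: show that the image of $\psi_\i$ is cut out inside the space of response matrices by the vanishing/positivity pattern of the circular minors (as in \cite{CIM, dVGV}), that this pattern determines $w$, and that the minors give explicit subtraction-free rational inverse formulas for the parameters $a_j$, yielding both disjointness and the homeomorphism. I would pursue the $SL_{2n+1}$ dictionary as the primary approach since it lets us cite Lusztig wholesale, and fall back on the response-matrix/minor computation to nail down the explicit homeomorphism if the dictionary is not literally an isomorphism of the relevant semigroups.
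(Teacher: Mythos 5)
Your step (i) (well-definedness of $E(w)$ and the covering statement) matches the paper, but both of your routes to injectivity/disjointness have genuine gaps. The primary route --- a dictionary sending $u_i(a)$ to ordinary elementary Jacobi matrices $y_i(a)$ in $SL_{2n+1}$ so that $E(w)$ maps onto Lusztig's cell $U^w_{>0}$ --- does not exist in the form you describe: there is no Lie algebra map from $\el_{2n}$ to the nilpotent subalgebra of $\ssl_{2n+1}$ sending $e_i$ to a rescaled Chevalley generator, because $[e_i,[e_i,e_{i\pm 1}]]=-2e_i$ is incompatible with the Serre relation $[f_i,[f_i,f_{i\pm1}]]=0$ (no rescaling or conjugation can turn a nonzero right-hand side into zero), and correspondingly the braid substitutions differ ($a+c+abc$ versus $a+c$), so one cannot cite Lusztig wholesale; you correctly sense this difficulty but the proposed ``subtraction-free bijective change of parameters'' is never constructed and is exactly the content that would need proof. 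The fallback route fails for a structural reason: the map $u\mapsto u\cdot L_0$ to response matrices is very far from injective --- every $u_{2i+1}(a)$ stabilizes $L_0$, and by the paper's Theorem \ref{T:efficient} the images $\image(\Theta_w)$ coincide for many distinct $w$ (only \emph{efficient} $w$ give injective $\Theta_w$) --- so circular minors of $u\cdot L_0$ cannot recover the parameters $a_j$ nor separate the cells $E(w)$ inside the group.

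What the paper actually does, and what is missing from your proposal, is to exploit the faithful linear realization $\Phi: EL_{2n}\to Sp_{2n}$ coming from Theorem \ref{thm:elsp}. First one reduces: injectivity of $\psi_\i$ reduces to disjointness of the cells (cancel a common factor to get an element of $E(w_0\text{-type})$ lying in a smaller cell), and disjointness of two arbitrary cells reduces, by multiplying extra factors, to showing $E(w_0)$ meets no other $E(v)$. Then, for the specific reduced word $u=[u_1(a_1)][u_2(a_2)u_1(a_3)]\cdots[u_k(a_{\ell-k+1})\cdots u_1(a_\ell)]$, the entries of one explicit row of the matrix $\Phi(u)$ determine the last block of parameters $a_{\ell-k+1},\ldots,a_\ell$ by solving a triangular sequence of linear equations whose pivots stay strictly positive as long as the previously recovered parameters are positive; the same computation detects when exactly one parameter is zero, which is what rules out $E(w_0)\cap E(v)\neq\emptyset$ when $v<w_0$ (writing the $v$-product as the $w_0$-shape with a zero parameter). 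This recovery algorithm also furnishes the continuous inverse of $\psi_\i$ for that reduced word of $w_0$, and continuity for all other reduced words and all $v\leq w_0$ follows from the continuously invertible braid/commutation formulas of Theorem \ref{thm:rels} and restriction to initial subwords. If you want to salvage your proposal, replace both of your routes by this use of $\Phi$ (or prove an honest transfer theorem to $(U_{2n+1})_{\geq 0}$, which is substantially harder than a citation).
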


\begin{proof}
Using Theorem \ref{thm:rels}, we can rewrite any product of generators $u_i(a)$ by performing braid moves similar to those in the symmetric group $S_{2n+1}$.  Any product of the generators can be transformed into a product that corresponds to a reduced word in $S_{2n+1}$, and thus it belongs to one of the $E(w)$. 

If the map $\psi_\i$ is not injective for some reduced word $\i$, then we can find two reduced products
$$u_{i_1}(a_1) \dotsc u_{i_\ell}(a_\ell) = u_{i_1}(b_1) \dotsc u_{i_\ell}(b_\ell)$$ for two $\ell$-tuples of positive numbers 
such that $a_1 \neq b_1$. Without loss of generality we can assume 
$a_1 > b_1$, and thus $$u_{i_1}(a_1-b_1) u_{i_2}(a_2) \dotsc u_{i_\ell}(a_\ell) = u_{i_2}(b_2) \dotsc u_{i_\ell}(b_\ell).$$ 
This shows that two different $E(w)$-s have non-empty intersection.  Thus it suffices to show that the latter is impossible. 

Furthermore, it suffices to prove that the top cell corresponding to the longest element $w_0$ does not intersect any other cell. Indeed, if any two cells intersect, by adding some extra factors to both 
we can lift it to the top cell intersecting one of the other cells.  Assume we have a product of the form
\begin{equation}\label{E:ured}
u = [u_1(a_1)] [u_2(a_2) u_1(a_3)] \dotsc [u_k(a_{\ell-k+1}) \dotsc u_1(a_{\ell})]
\end{equation} where $\ell = {k \choose 2}$.  Let $\Phi(u) \in Sp_{2n}$ be the image of $u$ in $Sp_{2n}$ under the natural map $EL_{2n} \to Sp_{2n}$ induced by the map $\phi:\el_{2n} \to \sp_{2n}$ of Theorem \ref{thm:elsp}.  We argue that the positive parameters $a_{\ell-k+1}, \ldots, a_\ell$ can be recovered uniquely 
from just looking at the $n+k/2$-th row of $\Phi(u)$ for $k$ even, or the $(k+1)/2$-th row for $k$ odd.  Furthermore, the same calculation will tell us if exactly one of them is equal to zero.

Indeed, assume $k$ is odd. Then the $n+(k+1)/2$-th entry in the $(k+1)/2$-th row is just equal to $a_{\ell-k+1}$. Next, once we know $a_{\ell-k+1}$,
 we can use $(k-1)/2$-th entry of the same row to recover $a_{\ell-k+2}$, after which we can use $n+(k-1)/2$ entry to recover $a_{\ell-k+3}$, and so on. For each step we solve a linear equation 
where the parameter we divide by is strictly positive as long as all previous parameters $a_i$ recovered are positive. For example, let $n=2$ and take the product 
$$u_1(a_1) u_2(a_2) u_1(a_3) u_3(a_4) u_2(a_5) u_1(a_6) .$$ 
Then the second row of this product is $(a_4 a_5, 1, a_4 + a_4 a_5 a_6, a_4)$. The last entry tells us $a_4$, then from 
the first entry we solve for $a_5$, then from the third entry we solve for $a_6$. The case of even $k$ is similar, the order in which we have to read the entries of the  $n+k/2$-th row
in this case is $k/2$-th, $n+k/2$-th, $k/2-1$-th, $n+k/2-1$-th, etc. For example, in the product
$$u_1(a_1) u_2(a_2) u_1(a_3) u_3(a_4) u_2(a_5) u_1(a_6) u_4(a_7) u_3(a_8) u_2(a_9) u_1(a_{10}),$$ the last row is $(a_7 a_8 a_9, a_7, a_7 a_8 + a_7 a_8 a_9 a_{10}, 1+a_7 a_8)$.
By looking at second, last, first and third entries we can solve for $a_7, a_8, a_9$ and $a_{10}$ one after the other. 

Now we are ready to complete the argument.  Assume that $E(w_0)$ intersects some other $E(v)$.  For any reduced word of $w_0$, one can find a subword which is a reduced word for $v$, so that we have
\begin{align*}
 u &= [u_1(a_1)] [u_2(a_2) u_1(a_3)] \dotsc [u_k(a_{\ell-k+1}) \dotsc u_1(a_{\ell})] \\ &= [u_1(b_1)] [u_2(b_2) u_1(b_3)] \dotsc [u_k(b_{\ell-k+1}) \dotsc u_1(b_{\ell})],
\end{align*}
where the $a_i$ are all positive, but the $b_i$ are nonnegative, with at least one zero.  The above algorithm of recovering the $a_i$'s will arrive at a contradiction.  Thus the $E(w_0)$ is disjoint from all other $E(v)$.

It remains to show that $\phi_\i^{-1}$ is continuous for any reduced word.  If $\i$ is the reduced word of $w_0$ used in \eqref{E:ured}, this follows from the algorithm above: the $a_i$ depend continuously on the matrix entries of $\Phi(u)$.  But any two reduced words are connected by braid and commutation moves, so it follows from the (continuously invertible) formulae in Theorem \ref{thm:rels} that $\phi_\i^{-1}$ is continuous for any reduced word of $w_0$.  But for any other $v \in S_{2n+1}$, a reduced word $\j$ for $v$ can be found as an initial subword of some reduced word $\i$ for $w_0$.  The map $\phi_\j^{-1}$ can then be expressed as a composition of: right multiplication by a fixed element of $EL_{2n}$, the map $\phi_\i^{-1}$, and the projection from $\R^{\ell(w_0)}_{>0}$ to $\R^{\ell(v)}_{>0}$, all of which are continuous.
\end{proof}

\subsection{Action on electrical networks}
Let $\P(n+1) \subset \R^{(n+1)^2}$ denote the set of response matrices of planar electrical networks with $n+1$ boundary vertices.  In \cite{CIM}, it is shown that $\P(n+1)$ is exactly the 
set of symmetric, circular totally-nonnegative $(n+1) \times (n+1)$ matrices with row sums equal to 0.  A matrix $M$ is circular totally-nonnegative if the signed minors $(-1)^k A_{(p_1,p_2,\ldots,p_k),(q_1,q_2,\ldots,q_k)}$ are all nonnegative, whenever $(p_1,p_2,\ldots,p_k,q_k,q_{k-1},\ldots,q_1)$ is in circular order.  In \cite[Th\'{e}or\`{e}me 4]{dVGV}, it is shown that $\P(n+1)$ can be identified with the set of planar 
electrical networks with $n+1$ boundary vertices modulo the local transformations of Section \ref{ss:trans}.  Let $N_0$ denote the empty network (with $n+1$ boundary vertices) and let $L_0 = L(N_0)$ denote the zero matrix.

%$Y-\Delta$ relation and the series and parallel relations.  

\begin{theorem}\label{T:action}
The nonnegative part $(EL_{2n})_{\geq 0}$ of the electrical group acts on $\P(n+1)$ via:
$$
u_i(a) \cdot L(N) = L(v_i(a)(N)).
$$
\end{theorem}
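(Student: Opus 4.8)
The plan is to show that the assignment $u_i(a)\mapsto v_i(a)$ extends to a semigroup action of $(EL_{2n})_{\geq 0}$ on $\P(n+1)$. First I would check that each $v_i(a)$ with $a\geq 0$ is a well-defined self-map of $\P(n+1)$. On the level of networks this is clear: adjoining a boundary spike or a boundary edge to a planar electrical network with $n+1$ boundary vertices yields another such network, and by \cite[Section 8]{CIM} the response matrix $L(v_i(a)(N))$ depends only on $L(N)$, so $v_i(a)$ descends to a map $\P(n+1)\to\P(n+1)$, given explicitly by \eqref{eq:cim}. The restriction to $a\geq 0$ is used here: the denominator $ax_{kk}+1$ in the odd case is $\geq 1$ on $\P(n+1)$, since off-diagonal entries of response matrices are nonpositive and rows sum to zero, so diagonal entries are nonnegative; for general $a$ the rational map $v_{2k-1}(a)$ need not preserve $\P(n+1)$. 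By Theorem \ref{thm:rels} the relations (1)--(3) hold in $(EL_{2n})_{\geq 0}$, and together with the cell decomposition of Proposition \ref{P:decomp} (uniqueness of the factorization $\psi_\i$ for each reduced word, and disjointness of the cells) they suffice to pass between any two factorizations of an element of $(EL_{2n})_{\geq 0}$ into generators. Hence it is enough to verify that the maps $v_i(a)$ themselves satisfy (1)--(3).

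Relations (1) and (2) are essentially formal. For (1), two successive boundary spikes at $k$ produce an interior degree-two vertex joined by conductances $1/b$ and $1/a$, which a series transformation (Proposition \ref{P:SP}) replaces by a single conductance $1/(a+b)$, so $v_{2k-1}(a)v_{2k-1}(b)=v_{2k-1}(a+b)$; two successive boundary edges at $\{k,k+1\}$ are in parallel and combine to conductance $a+b$. For (2), the hypothesis $|i-j|>1$ is precisely what forces the two operations to act on disjoint pieces of the network (distinct edges, and spikes at vertices not incident to the relevant edge), so they commute. Both relations can alternatively be read off directly from \eqref{eq:cim}.

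The substantive point is relation (3), the electrical braid relation for $|i-j|=1$. I would treat the cases by hand: $v_{2k-1}(a)v_{2k}(b)v_{2k-1}(c)(N)$ creates an interior degree-three vertex, the centre of a star with incident conductances $1/a$, $1/c$, $b$; Kennelly's $Y-\Delta$ transformation (Proposition \ref{P:YDelta}) converts this star into a triangle whose conductances work out to $1/\alpha$, $\beta$, $\gamma$ with $\alpha=a+c+abc$, $\beta=bc/\alpha$, $\gamma=ab/\alpha$, and the resulting network coincides with the one produced by $v_{2k}(\beta)v_{2k-1}(\alpha)v_{2k}(\gamma)(N)$. The companion cases ($v_{2k}(a)v_{2k\pm1}(b)v_{2k}(c)$, and the two positions of a spike vertex relative to an adjoined edge around the circle) are handled the same way, some requiring an auxiliary series/parallel step before or after the $Y-\Delta$ move. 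This bookkeeping -- tracking which vertices become interior and which local piece forms the star versus the triangle -- is the main obstacle. A clean conceptual shortcut is that (3) for the $v_i$ is exactly a rational-function identity in $a,b,c$ which, after writing $v_{2k-1}$ and $v_{2k}$ as the lower- and upper-triangular one-parameter subgroups in the fractional-linear action $Z\mapsto(AZ+B)(CZ+D)^{-1}$ of $Sp_{2n}$ on symmetric matrices (via $EL_{2n}\cong Sp_{2n}$ of Theorem \ref{thm:elsp}, using the zero-row-sum constraint to view $\P(n+1)$ inside symmetric $n\times n$ matrices, and Sherman--Morrison for the inverse of a rank-one perturbation of the identity), reduces to the same $SL_2$ computation as in the proof of Theorem \ref{thm:rels}. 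Once (1)--(3) are verified for the $v_i$, the assignment $u_i(a)\mapsto v_i(a)$ extends uniquely to the desired semigroup action, proving the theorem.
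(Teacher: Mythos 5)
Your proposal is correct and follows essentially the same route as the paper: well-definedness of each $v_i(a)$ on response matrices via \eqref{eq:cim}, verification of relations (1)--(3) for the network operations using series/parallel and the $Y-\Delta$ transformation, and then Proposition \ref{P:decomp} to reconcile any two factorizations of an element of $(EL_{2n})_{\geq 0}$. The extra details you supply (positivity of the denominator $ax_{kk}+1$, the case bookkeeping for (3), and the $Sp_{2n}$ fractional-linear shortcut) are elaborations of, not departures from, the paper's argument.
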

\begin{proof}
For a single generator $u_i(a)$, the stated action is well-defined because it can be described explicitly on the level of response matrices.  The formulae for $L( v_i(a)(N))$ in terms of $L(N)$ is given by the equation \eqref{eq:cim}.
%which in \cite[Section 8]{CIM}.

We first show that the relations of Theorem \ref{thm:rels} hold for this action.  Relation (1) follows from the series-parallel relation for networks.  Relation (2) is immediate: the corresponding networks are identical without any transformations.  Relation (3) follows from  the $Y-\Delta$ transformation (see Example \ref{ex:YDelta}).

But now suppose we have two different expressions for $u \in (EL_{2n})_{\geq 0}$ in terms of generators.  Then using Relations (1)-(3) of Theorem \ref{thm:rels}, we may assume that both expressions are products corresponding to a reduced word.  By Proposition \ref{P:decomp}, the two products come from reduced words for the same $w \in S_{2n+1}$.  It follows that they are related by relations (1)-(3).
\end{proof}

\begin{example}\label{ex:YDelta}
 The products $u_3(a) u_4(b) u_3(c)$ and $u_4(\frac{bc}{a+c+abc}) u_3(a+c+abc) u_4(\frac{ab}{a+c+abc})$ act on a network in exactly the same way, as shown in Figure \ref{fig:elec10}. 
\begin{figure}[h!]
    \begin{center}
    \input{eLie4.pstex_t}
    \end{center}
    \caption{}
    \label{fig:elec10}
\end{figure}
\end{example}
%
%\begin{proof}
% It was shown in \cite{CIM} that any two electrical networks with the same response matrix can be connected by local moves. It is easy to see that each of those moves corresponds either
%to the first or the third kind of relation in Theorem \ref{thm:rels}. The second relation allows us to make sure that the terms needed to perform the first and the third relations can 
%indeed be collected together. The statement follows.
%\end{proof}

A permutation $w = w(1) w(2) \cdots w(2n+1) \in S_{2n+1}$ is {\it efficient} if 
\begin{enumerate}
\item
$w(1) < w(3) < \cdots < w(2n+1)$ and
$w(2) < w(4) < \cdots < w(2n)$
\item
$w(1) < w(2)$, $w(3) < w(4)$, $\ldots$, $w(2n-1) < w(2n)$.
\end{enumerate}
Recall the {\it left weak order} of permutations is given by $w \preceq v$ if and only if there is a $u$ so that $uw = v$ and $\ell(v) = \ell(u) + \ell(w)$.  It is a standard fact that $w \preceq v$ if and only if whenever $w(i) > w(j)$ and $i < j$ then $v(i) > v(j)$.  Thus the set of efficient permutations has a maximum in left weak order, namely $w = 1 (n+2) 2 (n+3) \cdots (n) (2n+1) (n+1)$ with length $n(n-1)/2$.

\begin{thm}\label{T:efficient}
Let $w \in S_{2n+1}$.  The map $\Theta_w: E(w) \to \P(n+1)$ given by $\Theta_w(u) = u \cdot L_0$ is injective if and only if $w$ is efficient.  We have $\image(\Theta_w) \cap \image(\Theta_v) = \emptyset$ for $w \neq v$ both efficient.  If $w$ is not efficient, there is a unique efficient $v$ such that $\image(\Theta_w) = \image(\Theta_v)$.
\end{thm}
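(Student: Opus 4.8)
The plan is to analyze everything through a fixed reduced word $\i=i_1\cdots i_\ell$ of $w$. By Proposition \ref{P:decomp}, $\psi_\i$ is a homeomorphism onto $E(w)$, so $\Theta_w$ is injective if and only if the composite $\Theta_w\circ\psi_\i\colon\R_{>0}^{\ell}\to\P(n+1)$ is injective for one (equivalently, every) reduced word; and $\Theta_w(\psi_\i(a_1,\dots,a_\ell))=u_{i_1}(a_1)\cdots u_{i_\ell}(a_\ell)\cdot L_0$, which I would compute either by iterating \eqref{eq:cim} or, equivalently, by building the network obtained from $N_0$ by successively adjoining the boundary spikes and edges prescribed by $\i$, read right to left. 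Two elementary identities drive the argument: adjoining a boundary spike to the empty network does nothing, i.e. $u_{2k-1}(t)\cdot L_0=L_0$; and a boundary spike adjoined immediately after a boundary edge incident to the spiked vertex is absorbed by a series reduction (Proposition \ref{P:SP}), so $u_{2k-1}(b)u_{2k}(c)\cdot L_0=u_{2k}\!\big(c/(1+bc)\big)\cdot L_0$ and likewise $u_{2k+1}(b)u_{2k}(c)\cdot L_0=u_{2k}\!\big(c/(1+bc)\big)\cdot L_0$.

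First I would settle the ``only if'' direction, in the stronger form: if $w$ is not efficient, then $\Theta_w$ is not injective and $\image(\Theta_w)=\image(\Theta_{w'})$ for some $w'$ with $\ell(w')=\ell(w)-1$. If (2) fails, say $w(2k-1)>w(2k)$, then $2k-1$ is a descent of $w$, so $w$ has a reduced word ending in $s_{2k-1}$; then $\Theta_w\circ\psi_\i$ is independent of the last parameter and equals $\Theta_{w'}\circ\psi_{\i'}\circ p$ with $w'=ws_{2k-1}$ and $p$ the projection forgetting the last coordinate. If (2) holds but (1) fails, a short computation with descents shows that an odd-position descent $w(2k-1)>w(2k+1)$ forces a reduced word of $w$ ending in $s_{2k-1}s_{2k}$, and an even-position descent $w(2j)>w(2j+2)$ forces one ending in $s_{2j+1}s_{2j}$; in both cases the second identity above rewrites $\Theta_w\circ\psi_\i$ as $\Theta_{w'}\circ\psi_{\i'}\circ\pi$ for a reduced word $\i'$ of length $\ell-1$ and a surjection $\pi\colon\R_{>0}^{\ell}\to\R_{>0}^{\ell-1}$, $(\dots,b,c)\mapsto(\dots,c/(1+bc))$, with positive-dimensional fibers. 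Since $p$ and $\pi$ are surjective but not injective, $\Theta_w$ is not injective and $\image(\Theta_w)=\image(\Theta_{w'})$. Iterating (induction on $\ell(w)$) gives an efficient $v$ with $\image(\Theta_w)=\image(\Theta_v)$.

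For the ``if'' direction and disjointness, fix an efficient $w$ and a distinguished reduced word $\i$, and let $N_\i(\mathbf a)$ be the network built by adjoining the prescribed spikes and edges to $N_0$. I would show that for positive $\mathbf a$ the graph $N_\i(\mathbf a)$ admits no series, parallel, loop, or interior-degree-one simplification: each such simplification, traced back through the build-up, exhibits a violation of (1) or (2). Thus efficiency is exactly the condition that $N_\i(\mathbf a)$ is a critical (``well-connected'') planar network in the sense of \cite{dVGV,CIM}, with $\ell(w)$ edges whose conductances are the boundary-edge parameters together with the reciprocals of the boundary-spike parameters; in particular the conductances determine and are determined by $\mathbf a$. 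By the solution of the inverse problem for critical planar networks (\cite[Th\'{e}or\`{e}me 4]{dVGV}, \cite{CIM}), the response matrix of $N_\i(\mathbf a)$ recovers its conductances, hence $\mathbf a$, so $\Theta_w\circ\psi_\i$ is injective. For disjointness, distinct efficient $w$ produce critical networks with distinct connection patterns, and by \cite{CIM} the connection pattern is read off from the vanishing and positivity of the circular minors of the response matrix; hence $\image(\Theta_w)\cap\image(\Theta_v)=\emptyset$ for distinct efficient $w,v$. The last assertion of the theorem is then immediate: for non-efficient $w$, the efficient $v$ with $\image(\Theta_w)=\image(\Theta_v)$ produced above is unique because distinct efficient permutations have disjoint images.

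The main obstacle is the combinatorial heart of the third paragraph — that efficiency is precisely the criticality condition for $N_\i(\mathbf a)$ — together with a clean deduction of injectivity on the nose (not merely generically) from the \cite{CIM,dVGV} recovery. A more self-contained alternative, in the spirit of the proof of Proposition \ref{P:decomp}, is to recover $a_1,\dots,a_\ell$ one at a time from a single distinguished row of $\Theta_w(\psi_\i(\mathbf a))$ by solving a positive linear equation at each step; there the delicate point is bookkeeping which matrix entries are forced to vanish because $L_0=0$, and this vanishing pattern is governed exactly by conditions (1) and (2).
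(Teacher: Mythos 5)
Your treatment of the non-efficient case is essentially the paper's: the same two identities ($u_{2k-1}(t)\cdot L_0=L_0$, and absorption of a spike into an adjacent boundary edge by a series reduction), the same choice of reduced words ending in $s_{\mathrm{odd}}$ or in an adjacent pair, and uniqueness of the efficient $v$ deduced from disjointness. That half is fine.

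The gap is in the ``if'' direction, which you yourself flag as the main obstacle but do not close, and the route you sketch would not close it as stated. You propose to show that for efficient $w$ the network $N_\i(\mathbf a)$ admits no series, parallel, loop, or pendant simplification, and to conclude that it is critical, so that the inverse-problem results of \cite{CIM,dVGV} recover the conductances. But absence of such local simplifications does not imply criticality: e.g.\ a triangle on three boundary vertices with a $Y$ attached to them at an interior vertex has no series/parallel/loop/pendant reduction, yet a $Y-\Delta$ move produces parallel edges and the graph is not critical. So even granting the (true, but unproven here) fact that the networks of efficient permutations are critical, your argument for it is not valid, and in addition your disjointness step (``distinct efficient $w$ produce distinct connection patterns'') is asserted rather than proved. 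The paper avoids both burdens: it only needs recoverability for the single network attached to the \emph{maximal} efficient permutation $w^*$, whose reduced word produces exactly the standard graph of \cite[Section 7]{CIM} (equivalently $C_N$ of \cite{dVGV}), for which injectivity is a quoted theorem; injectivity for every other efficient $w$ then follows from the left weak order: $w\preceq w^*$, so writing $w^*=uw$ and left-multiplying two distinct preimages by a fixed positive word for $u$ would transfer a failure of injectivity from $\Theta_w$ to $\Theta_{w^*}$. For disjointness the paper proves the precise dictionary that $N_w$ is $(i,j)$-connected if and only if $w(2i)>w(2j-1)$, and notes that these are exactly the possible inversions of an efficient permutation, hence determine $w$. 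To repair your write-up you would either have to prove criticality of $N_\i(\mathbf a)$ for every efficient $w$ by a genuinely different argument (e.g.\ via lensless medial graphs), plus the connection-pattern dictionary, or simply adopt the paper's $w^*$-plus-weak-order reduction; your fallback (entrywise recovery of the $a_i$ as in Proposition \ref{P:decomp}) is also not carried out, since the vanishing pattern created by acting on $L_0$ is exactly the point that needs control.
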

\begin{proof}
Let $w^* \in S_{2n+1}$ be the efficient permutation of maximal length.  Then a possible reduced word for $w^*$ is $(n+1) \cdots \left(46 \cdots (2n-2)\right)\left(35 \cdots (2n-1)\right) \left(246 \cdots (2n)\right)$.  The graph obtained by taking the corresponding $u_i(a)$ and acting on the empty network is exactly the ``standard graph'' of \cite[Section 7]{CIM} or the graph $C_N$ of \cite{dVGV}.  In particular, $\Theta_{w^*}$ is injective by \cite[Theorem 2]{CIM} or \cite[Th\'{e}or\`{e}me 3]{dVGV}.
Suppose $w$ is an arbitrary efficient permutation.  
Then since $w^* = uw$ for some $u \in S_{2n+1}$, if $\Theta_w$ is not injective then $\Theta_{w^*}$ is not injective as well, so we conclude that $\Theta_w$ is injective.

For a pair $(i,j)$ with $1 \leq i < j \leq n+1$, let us say that a network $N$ is $(i,j)$-{\it {connected}} if we can find a disjoint set of paths $p_1, p_2,...,p_{\lfloor (j-i+1)/2\rfloor}$ 
so that for each $k$, the path
$p_k$ connects boundary vertex $i+k-1$ to boundary vertex $j-k+1$ without passing through any other boundary vertex.  This is a special case of the connections of circular pairs $(P,Q)$
 of \cite{CIM}.  Let $N_w$ be a graph constructed from some reduced word of an efficient $w$.  We observe that $N_w$ is $(i,j)$-connected if and only if $w(2i) > w(2j-1)$.
\begin{figure}[h!]
    \begin{center}
    \input{eLie5.pstex_t}
    \end{center}
    \caption{}
    \label{fig:eLie5}
\end{figure}
For example, the first network in Figure \ref{fig:eLie5} corresponds to $w = s_5 s_3 s_6 s_4 s_2 = (1,3,5,2,7,4,6)$. We see that it is $(2,4)$-connected, which agrees with 
$w(4)=6>5=w(7)$. If the dashed edge is not there, we have $w = s_3 s_6 s_4 s_2 = (1,3,5,2,4,7,6)$ and $w(4)=5<6=w(7)$ in agreement with the network not being $(2,4)$-connected. Similarly, 
the second network in Figure \ref{fig:eLie5} corresponds to $w = s_4 s_5 s_3 s_6 s_4 s_2 = (1,3,5,7,2,4,6)$ with the dashed edge and to $w = s_5 s_3 s_6 s_4 s_2 = (1,3,5,2,7,4,6)$ without. In the first case
it is $(1,4)$-connected, in the second it is not, in agreement with relative order of $w(2)$ and $w(7)$.

Note that the inversions $w(2i) > w(2j-1)$ are exactly the possible inversions of an efficient permutation.  Since $w$ is determined by its inversions, it follows that the set of $(i,j)$-connections 
of $N_w$ determines $w$, and that $\image(\Theta_w) \cap \image(\Theta_v) = \emptyset$ for $w \neq v$ both efficient.

Suppose $w$ is not efficient.  Then it is easy to see that $w$ has a reduced expression $s_{i_1}s_{i_2} \cdots s_{i_\ell}$ where either (1) $i_\ell$ is odd, or (2) $i_\ell$ is even 
and $i_\ell = i_{\ell-1} \pm 1$.  But $u_i(a) \cdot L_0 = L_0$ for odd $i$ since all the boundary vertices are still disconnected in $u_i(a) \cdot N_0$, and for even $i$ we have $u_{i\pm1}(a) u_{i}(b) \cdot L_0 = u_i(1/a + b) L_0$, using the series-transformation (Proposition \ref{P:SP}).  It follows that $\Theta_w$ is not injective.  Furthermore, $\image(\Theta_w) = \image(\Theta_v)$, where $v$ is obtained from $w$ by recursively (1) removing $i_\ell$ from a reduced word of $w$ if $i_\ell$ is odd, or (2) changing the last two letters $(i_{\ell}\pm 1) i_\ell$ to $i_\ell$ when $i_\ell$ is even.  An efficient $v$ obtained in this way must be unique, since $\image(\Theta_v) \cap \image(\Theta_{v'}) = \emptyset$ for efficient $v \neq v'$.
\end{proof}

\begin{cor}
The number of efficient $w \in S_{2n+1}$ is equal to the Catalan number $C_{n+1} = \frac{1}{n+2}{2n+2 \choose n+1}$.
\end{cor}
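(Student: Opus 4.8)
The plan is to establish a bijection between efficient permutations in $S_{2n+1}$ and some Catalan family, most naturally by encoding an efficient $w$ via its set of inversions. Recall from the discussion preceding Theorem~\ref{T:efficient} that $w$ is determined by its inversions, and that the inversions of an efficient $w$ are exactly pairs $(2i, 2j-1)$ with $i < j$ and $w(2i) > w(2j-1)$; equivalently, by the computation in the proof of Theorem~\ref{T:efficient}, $w$ is recorded by the family of pairs $(i,j)$, $1 \le i < j \le n+1$, for which $N_w$ is $(i,j)$-connected.

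First I would make precise which collections $S$ of pairs $(i,j)$ with $1 \le i < j \le n+1$ can arise. The key structural constraint is a ``nesting'' or noncrossing-type condition: since the paths $p_1, \dots, p_{\lfloor (j-i+1)/2 \rfloor}$ witnessing $(i,j)$-connectedness are forced to be nested arcs inside the disk, if $(i,j) \in S$ then $(i+1, j-1) \in S$ whenever $i+1 < j-1$; and connectedness sets of distinct circular pairs cannot "cross" in the planar sense. Translating the left-weak-order description of efficient permutations, one checks that $S$ ranges exactly over the order ideals of a certain poset, or equivalently over Dyck-path-type objects. Concretely, I expect that associating to an efficient $w$ the lattice path that at step $k$ records whether $w(2k) > w(2k+1)$ or not (an up-step versus a down-step, suitably initialized) yields a Dyck path of semilength $n+1$, and that conditions (1) and (2) in the definition of efficient translate precisely into the path staying weakly above the axis and returning to it at the end.

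The cleanest route, which I would actually carry out, is to exhibit the bijection directly from the two defining conditions. An efficient $w \in S_{2n+1}$ is equivalent to a pair of increasing sequences $w(1) < w(3) < \cdots < w(2n+1)$ (the odd-position values, a subset $O$ of $\{1, \dots, 2n+1\}$ of size $n+1$) and $w(2) < w(4) < \cdots < w(2n)$ (the even-position values, the complementary subset $E$ of size $n$), subject to the interleaving inequalities $w(2k-1) < w(2k)$ for all $k$. Writing out the elements of $\{1, \dots, 2n+1\}$ in order and marking each as $O$ or $E$, the condition $w(2k-1) < w(2k)$ for every $k$ says exactly that in every prefix the number of $O$-marks is at least the number of $E$-marks (so that the $k$-th $O$ precedes the $k$-th $E$). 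Reading $O \mapsto {+1}$, $E \mapsto {-1}$ gives a lattice path from $0$ of length $2n+1$ with $n+1$ up-steps and $n$ down-steps, staying weakly positive; such paths are counted by $C_{n+1}$ (the ballot number $\binom{2n+1}{n} - \binom{2n+1}{n-1} = \frac{1}{n+2}\binom{2n+2}{n+1}$, which is $C_{n+1}$). That $1 = w(1) \in O$ forces the first step up automatically, consistent with the count.

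The main obstacle is simply getting the translation of conditions (1)--(2) into the weak-positivity condition exactly right, including boundary/parity bookkeeping (the total has odd length $2n+1$, one position is left unmatched, and one must confirm the resulting count is $C_{n+1}$ rather than an off-by-one Catalan number or a central binomial coefficient). I would double check with small cases: for $n=1$, efficient $w \in S_3$ should number $C_2 = 2$, namely $w = 123$ and $w = 132 = 1(n+2)2$; for $n = 2$, $C_3 = 5$. Once the dictionary is pinned down, the enumeration is the classical ballot-sequence count and the corollary follows immediately; alternatively one can cite that the maximal efficient permutation has length $n(n-1)/2$ and invoke the known Catalan structure of its lower order ideal, but the explicit $O/E$-marking bijection above is self-contained and is the version I would write.
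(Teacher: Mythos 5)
Your argument is correct, but it takes a different route from the paper. The paper's proof works with inversion sets: it observes that $(2i,2j+1)$ is an inversion of an efficient $w$ only if $(2i,2j-1)$ and $(2i+2,2j+1)$ are, identifies the possible inversion sets with lower order ideals of the positive root poset of $\ssl_{n+1}$, and cites the known fact (Fomin--Reading) that such ideals are counted by the Catalan number --- the option you mention in passing but decline to carry out. Your main argument instead encodes an efficient $w$ by the set $O$ of its odd-position values: conditions (1) make $w$ determined by $O$, and condition (2), namely $w(2k-1)<w(2k)$ for all $k$, is equivalent to the ballot condition that every prefix of $\{1,\dots,2n+1\}$ contains at least as many elements of $O$ as of its complement (the $k$-th smallest element of $O$ precedes the $k$-th smallest element of $E$), and the resulting weakly nonnegative paths with $n+1$ up-steps and $n$ down-steps are counted by $\binom{2n+1}{n}-\binom{2n+1}{n-1}=\frac{1}{n+2}\binom{2n+2}{n+1}=C_{n+1}$, which checks out. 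Your bijection is self-contained and elementary, whereas the paper's identification with root-poset ideals ties in with the later observation that the inversions of efficient permutations are exactly the roots $\alpha_{i,j}$ with $i,j$ even indexing a basis of $\el_{2n}/\el^0_{2n}$. One remark: your first two paragraphs about $(i,j)$-connectivity and nested paths are speculative and unnecessary for the count --- the third paragraph alone is the proof, and the connectivity discussion is only needed for Theorem \ref{T:efficient} itself, not for this corollary.
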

\begin{proof}
It is clear that $(2i,2j+1)$ is an inversion of $w$ only if $(2i,2j-1)$ and $(2i+2,2j+1)$ are also inversions, and this characterizes inversion sets of efficient $w$.  
Thus the set of efficient $w \in S_{2n+1}$ is in bijection with the lower order ideals of the positive root poset of $\ssl_{n+1}$ which is well known to be enumerated by the 
Catalan number \cite{FR}.
\end{proof}

\begin{cor}\label{cor:dense}
$(EL_{2n})_{\geq 0} \cdot L_0$ is dense in $\P(n+1)$.
\end{cor}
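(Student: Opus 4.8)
The plan is to pass to the ``top cell'' and then invoke the inverse-problem theory of \cite{CIM, dVGV}. Let $w^*$ be the efficient permutation of maximal length, so that $(EL_{2n})_{\geq 0}\cdot L_0 \supseteq \image(\Theta_{w^*})$. As observed in the proof of Theorem \ref{T:efficient}, acting with the one-parameter subgroups coming from a reduced word of $w^*$ on the empty network $N_0$ produces exactly the ``standard graph'' $C_N$ of \cite[Section 7]{CIM} (the graph $C_N$ of \cite{dVGV}), whose $\binom{n+1}{2}$ conductances run over all of $\R_{>0}^{\binom{n+1}{2}}$ as the positive parameters do. Hence, by Theorem \ref{T:action}, $\image(\Theta_{w^*})$ is precisely the set of response matrices of $C_N$ with strictly positive conductances, and it suffices to show that this set is dense in $\P(n+1)$.

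For that I would use two facts from \cite{CIM, dVGV}. First, $\P(n+1)$ is the set of symmetric circular totally nonnegative matrices (recalled above), while $\image(\Theta_{w^*})$ is the subset where in addition all the relevant circular minors are strictly positive. Second, every $L\in\P(n+1)$ is the response matrix of a network electrically equivalent (via the transformations of Section \ref{ss:trans}) to one obtained from $C_N$ by deleting a subset of its edges, that is, by sending some of the parameters $a_i$ to a limit along $\R_{>0}$: a boundary edge of $C_N$ is deleted as the corresponding $a_i\to 0$, and a spike as the corresponding $a_i\to\infty$. Since by \eqref{eq:cim} the response matrix depends continuously on the $a_i$ up to these limits, $L$ lies in $\overline{\image(\Theta_{w^*})}$; hence $\P(n+1)\subseteq\overline{\image(\Theta_{w^*})}\subseteq\overline{(EL_{2n})_{\geq 0}\cdot L_0}$, which is the assertion.

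The main obstacle is the second fact together with the continuity at the boundary of the parameter space. One must check that every equivalence class in $\P(n+1)$ is indeed represented by a degeneration of the standard graph, and one must keep track of the fact that spikes and boundary edges of $C_N$ degenerate at \emph{opposite} ends of the parameter range, so that no boundary edge of $C_N$ ever has to be contracted --- contraction of a boundary edge is the one case that would blow up in \eqref{eq:cim}. Both points are essentially contained in \cite{CIM, dVGV} and I would not reprove them. An equivalent, network-free phrasing of the same argument: $\image(\Theta_{w^*})$ is the strictly circular totally positive locus and $\P(n+1)$ its totally nonnegative analogue, and --- exactly as for Lusztig's totally nonnegative cells \cite{Lus} --- the nonnegative stratum is the closure of the positive one, from which density follows at once.
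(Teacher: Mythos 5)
Your proposal is correct and is essentially the paper's own argument: the paper's proof is exactly the reduction to the top cell (whose image under $\Theta_{w^*}$ is the set of response matrices of the standard graph $C_N$ with positive conductances, via Theorem \ref{T:efficient}) followed by a citation of \cite[Th\'{e}or\`{e}me 5]{dVGV} for density, which is the degeneration/closure fact you invoke from \cite{CIM,dVGV} rather than reprove.
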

\begin{proof}
Follows from Theorem \ref{T:efficient} and \cite[Th\'{e}or\`{e}me 5]{dVGV}.
\end{proof}

%We may topologize $\P(n)$ by taking the induced subspace topology from $\R^{n^2}$.  Then

%\begin{cor}
%$(EL_{2n})_{\geq 0}.L_0$ is dense in $\P(n)$.
%\end{cor}
%\begin{proof}
%The efficient $w \in S_{2n+1}$ of greatest length is $w = 1 (n+1) 2 (n+2) \cdots (n-1) (2n+1) n$ with length $n(n+1)/2$, and it is greater in Bruhat order than any other efficient $v \in S_{2n+1}$.  

%Picking an appropriate reduced word, we obtain the standard graph of \cite{CIM} on $n$ vertices. \fixit{Why is this guy dense?}
%\end{proof}

\subsection{Stabilizer}

\begin{lem}
The stabilizer subsemigroup of $(EL_{2n})_{\geq 0}$ acting on the zero matrix $L_0$ is the subsemigroup generated by $u_{2i+1}(a)$ for all $a \in \R_{\geq 0}$ and all $i$.
\end{lem}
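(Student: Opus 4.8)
The plan is to show the two containments.  One direction is immediate: each generator $u_{2i+1}(a)$ fixes $L_0$, since adjoining a boundary spike to the empty network leaves all boundary vertices disconnected, so $v_{2i+1}(a)(N_0)$ still has zero response matrix; hence the subsemigroup generated by the $u_{2i+1}(a)$ stabilizes $L_0$.  The content is the reverse inclusion: any $u \in (EL_{2n})_{\geq 0}$ with $u \cdot L_0 = L_0$ lies in this subsemigroup.

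For the reverse inclusion I would argue via the cell decomposition of Proposition~\ref{P:decomp} together with Theorem~\ref{T:efficient}.  Given $u \in (EL_{2n})_{\geq 0}$, write $u \in E(w)$ for a unique $w \in S_{2n+1}$, so $u = \psi_\i(a_1,\dots,a_\ell)$ for a reduced word $\i$ of $w$ and positive parameters.  The map $\Theta_w(u) = u \cdot L_0$ has image $\image(\Theta_w) = \image(\Theta_v)$ for the unique efficient $v \preceq w$ associated to $w$ in the proof of Theorem~\ref{T:efficient}, and $u \cdot L_0 = L_0$ forces $v$ to be the identity permutation (since $\image(\Theta_{\mathrm{id}}) = \{L_0\}$ and the efficient images are disjoint, any efficient $v$ with $L_0 \in \image(\Theta_v)$ must have $\Theta_v(u') = L_0$ for some $u'$, and the $(i,j)$-connection count shows $v$ has no inversions).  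Tracing the reduction procedure in the proof of Theorem~\ref{T:efficient} — repeatedly deleting a trailing odd letter $i_\ell$, or collapsing a trailing pair $(i_\ell \pm 1)\, i_\ell$ with $i_\ell$ even to the single letter $i_\ell$ — the terminal word being empty means every even letter of $\i$ got absorbed into such a collapse.  Concretely, using $u_{i}(a)\cdot L_0 = L_0$ for odd $i$ and $u_{i \pm 1}(a) u_i(b) \cdot L_0 = u_i(1/a + b)\cdot L_0$ for even $i$ (from the series transformation, Proposition~\ref{P:SP}), I can rewrite $u \cdot L_0$ step by step, peeling off or merging trailing generators, and conclude that at each stage the surviving product that still acts nontrivially is a product of odd-indexed generators only; since the whole thing equals $L_0$, the even contributions all cancel and $u$ itself, as an element of $(EL_{2n})_{\geq 0}$, can be rewritten (via the relations of Theorem~\ref{thm:rels}) as a product of $u_{2i+1}(a)$'s.

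The key subtlety, and the step I expect to be the main obstacle, is the last clause: passing from ``$u \cdot L_0$ can be computed using only odd generators'' to ``$u$ \emph{itself} lies in the subsemigroup generated by odd generators.''  The identities $u_i(a)\cdot L_0 = L_0$ and $u_{i\pm1}(a)u_i(b)\cdot L_0 = u_i(1/a+b)\cdot L_0$ are statements about the \emph{action}, not about equality in $EL_{2n}$, so a priori they only control the $S_{2n+1}$-cell of $u$ up to the efficient-reduction equivalence.  To close the gap I would invoke the injectivity half of Proposition~\ref{P:decomp} more carefully: the reduction shows the cell of $u$ is some $w$ whose associated efficient permutation is the identity, i.e. $w$ has no inversions of the form $w(2i) > w(2j-1)$; and a permutation all of whose inversions are of this ``even-before-odd'' type, yet which has \emph{none} of them, must be expressible by a reduced word using only odd simple reflections $s_{2i+1}$ (these generate the parabolic subgroup whose only inversions are odd-index-to-odd-index, which are vacuously excluded).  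Then by Proposition~\ref{P:decomp} the element $u \in E(w)$ equals $\psi_\j(b_1,\dots)$ for a reduced word $\j$ of $w$ in odd letters and positive $b_i$, exhibiting $u$ in the desired subsemigroup.  Verifying that this characterization of $w$ is exactly right — and that the efficient-reduction really does detect ``no even letters survive'' — is where the argument needs the most care.
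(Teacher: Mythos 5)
Your forward inclusion and the first reduction step are fine: by Proposition \ref{P:decomp} a stabilizing $u$ lies in a unique cell $E(w)$, and by Theorem \ref{T:efficient} (disjointness of efficient images, with the identity being efficient and $\image(\Theta_{\mathrm{id}})=\{L_0\}$) the efficient permutation attached to $w$ must be the identity. The genuine gap is the step you yourself flag as the main subtlety, and as written it is closed by a false lemma. First, your description of the reduction is backwards: the collapse of a trailing pair $(i_\ell\pm 1)\,i_\ell$ with $i_\ell$ even deletes the \emph{odd} letter and keeps the even one, so even letters are never ``absorbed'' and no ``even contributions cancel.'' Second, the inversion criterion does not work: having no inversion of the form $w(2i)>w(2j-1)$ neither characterizes ``efficient reduction $=\mathrm{id}$'' nor implies that $w$ lies in the odd parabolic. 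Concretely, for $n=1$ the permutation $w=312\in S_3$ (the cell of $u_1(a)u_2(b)$ in the paper's conventions) has no such inversion --- the only candidate position pair is $(2,3)$ and $w(2)=1<2=w(3)$ --- yet $w\notin\langle s_1\rangle$, its efficient reduction is $s_2\neq\mathrm{id}$, and $u_1(a)u_2(b)\cdot L_0\neq L_0$. So the sentence ``a permutation all of whose inversions are of this type, yet which has none of them, must be expressible in odd simple reflections'' is not a correct statement, and your proof does not go through as written.

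The conclusion can be rescued precisely by the correct reading of the reduction: each step of the procedure in Theorem \ref{T:efficient} deletes exactly one \emph{odd} letter from a reduced word (and hence every even generator in the support of the current permutation survives into the next one), so if the terminal efficient permutation is the identity, then $w$ has no even generator in its support; any reduced word $\i$ for $w$ then uses only odd letters, and $u=\psi_{\i}(a_1,\ldots,a_\ell)$ exhibits $u$ in the subsemigroup generated by the $u_{2i+1}(a)$. Note also that the paper's own proof is far more direct and avoids Proposition \ref{P:decomp} and Theorem \ref{T:efficient} entirely: if any expression of $u$ by generators with nonnegative parameters contains some $u_{2k}(a)$ with $a>0$, that boundary edge connects boundary vertices $k$ and $k+1$, and adjoining further spikes or edges can never destroy a boundary-to-boundary connection, so the resulting response matrix has a nonzero entry and $u\cdot L_0\neq L_0$; hence every expression of a stabilizing element uses only odd generators. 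Your architecture is workable once the support/odd-letter argument replaces the inversion claim, but it is considerably heavier than this monotone-connectivity observation.
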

\begin{proof}
It is clear that $u_{2i+1}(a)$ lies in the stabilizer.  But the action of any $u_{2i}(a)$ will change the connectivity of the network, and it is impossible to return to trivial connectivity by adding more edges, or by relations.
\end{proof}

The semigroup stabilizer is too small in the sense that it does not detect the relations $u_{i\pm1}(a) u_{i}(b) \cdot L_0 = u_i(1/a + b) L_0$ used in the proof of Theorem \ref{T:efficient}.  We shall calculate the stabilizer subalgebra of the corresponding infinitesimal action of the Lie algebra $\el_{2n}$, which will in particular give an algebraic explanation of these relations.   The reason we do not work with the whole Lie group $EL_{2n}$ is threefold: (1) non-positive elements of $EL_{2n}$ will produce networks that are ``virtual'', that is, have negative edge weights; (2) the topology of $EL_{2n}$ means that to obtain an action one cannot just check the relations of Theorem \ref{thm:rels}; (3) when the parameters are non-positive, the relation Theorem \ref{thm:rels}(3) develops singularities.

To describe the infinitesimal action of $\el_{2n}$, we give derivations of $\R[x_{ij}]$, the polynomial ring in $(n+1)(n+2)/2$ variables $x_{ij}$ where $1 \leq i , j \leq n+1$ and we set 
$x_{ij} = x_{ji}$.  

\begin{prop}
The electrical Lie algebra $\el_{2n}$ acts on $\R[x_{ij}]$ via derivations as follows:
\begin{align}\label{E:2i}
e_{2i} &\mapsto \partial_{ii} + \partial_{i+1,i+1} - \partial_{i,i+1}
\\
\label{E:2i1}
e_{2i-1}& \mapsto -\sum_{1 \leq p \leq q \leq n+1} x_{ip}x_{iq} \partial_{pq}
\end{align}
\end{prop}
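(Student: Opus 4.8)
The plan is to verify directly that the two derivations $\eqref{E:2i}$ and $\eqref{E:2i1}$ satisfy the defining relations of $\el_{2n}$, and to identify them as the infinitesimal version of the action in Theorem \ref{T:action}. Write $D_{2i}$ and $D_{2i-1}$ for the right-hand sides of $\eqref{E:2i}$ and $\eqref{E:2i1}$. The guiding principle is that $D_i$ should be $\frac{d}{dt}\big|_{t=0}$ of the map $v_i(t)$ on response matrices given by $\eqref{eq:cim}$; computing these derivatives gives exactly $D_{2i}$ from the linear formula $x_{pq} \mapsto x_{pq} + (\delta_{pi}-\delta_{(p+1)i})(\delta_{qi}-\delta_{(q+1)i})t$, and $D_{2i-1}$ from $x_{pq}\mapsto x_{pq} - t x_{pi}x_{iq}/(tx_{ii}+1)$, whose $t$-derivative at $0$ is $-x_{pi}x_{iq}$ (accounting for the symmetric-pair convention $1\le p\le q\le n+1$ and $x_{pq}=x_{qp}$). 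So the formulas are forced; what remains is to check they generate a genuine $\el_{2n}$-action rather than merely matching the one-parameter subgroups.

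The key steps, in order: (1) Record that $D_i$ is a well-defined derivation of $\R[x_{pq}]$ — immediate, since each is an $\R$-linear combination of the $\partial_{pq}$ with polynomial coefficients. (2) Verify $[D_i,D_j]=0$ for $|i-j|>1$. For $D_{2i},D_{2j}$ this is clear since both are constant-coefficient. For $D_{2i-1},D_{2j-1}$ with $|i-j|>1$: here one uses that $[D_{2i-1},D_{2j-1}]$ involves terms $x_{ip}x_{iq}\partial_{pq}(x_{jr}x_{js})$, and one must check the resulting expression is symmetric under $i\leftrightarrow j$ so the bracket vanishes; this is a short calculation. For $D_{2i-1},D_{2j}$ with $|i-j|>1$, one checks that $D_{2j}$ (supported on indices $j,j+1$) kills the coefficients $x_{ip}x_{iq}$ and that $D_{2i-1}$ does nothing visible to $D_{2j}$ — needs $j,j+1 \notin \{i\}$ and the relevant Kronecker deltas to not interact, which holds precisely when $|2i-1-2j|>1$. (3) Verify the electrical Serre relation $[D_i,[D_i,D_j]] = -2D_i$ for $|i-j|=1$. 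It suffices to treat a single $\ssl_2$-triple, e.g. $e_{2i-1},e_{2i}$ and $e_{2i},e_{2i+1}$; by the symmetry between spikes and edges it is enough to do one of each type, or equivalently to compute $[D_{2i},[D_{2i},D_{2i-1}]]$ and $[D_{2i-1},[D_{2i-1},D_{2i}]]$. The cleanest route is to exploit that, by Theorem \ref{T:action} and \eqref{eq:cim}, $\exp(tD_i)$ acts on $\R[x_{pq}]$ as pullback under $v_i(t)$, and the relation of Theorem \ref{thm:rels}(3) (equivalently the $Y$–$\Delta$ transformation, Proposition \ref{P:YDelta}) then forces the infinitesimal relation; alternatively one grinds the bracket directly using only $\partial_{pq}(x_{rs}) = \delta$-combinations. (4) Conclude that the assignment $e_i \mapsto D_i$ extends to a Lie algebra homomorphism $\el_{2n} \to \mathrm{Der}(\R[x_{pq}])$.

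The main obstacle is step (3): the direct computation of the double bracket for the spike generator $D_{2i-1} = -\sum_{p\le q} x_{ip}x_{iq}\partial_{pq}$ is the one genuinely nontrivial piece, because $D_{2i-1}$ is quadratic and one must carefully track the symmetric-index bookkeeping (the $p=q$ versus $p<q$ split, and the fact that $\partial_{pq}x_{pq}=1$ but $x_{pq}=x_{qp}$). The slick way to sidestep the bookkeeping is to work instead in the polynomial ring $\R[y_{pq}]$ in $(n+1)^2$ \emph{independent} variables with $D_{2i-1} = -\sum_{p,q} x_{ip}x_{iq}\partial_{y_{pq}}$ symmetrized, verify the identity there, and then restrict to the symmetric locus $y_{pq}=y_{qp}$; in that presentation $D_{2i-1}$ is the vector field generating the flow $X \mapsto X - t X e_i e_i^T X(1+t e_i^T X e_i)^{-1}$ on symmetric matrices, whose bracket relations can be checked by the same $SL_2$ computation as in the proof of Theorem \ref{thm:rels}. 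Once step (3) is in hand, steps (1), (2), (4) are routine, and the proposition follows.
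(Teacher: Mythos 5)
Your plan is essentially the paper's own proof, which is a two-line remark offering exactly your two routes: verify the defining relations of $\el_{2n}$ directly, or obtain the formulae by differentiating \eqref{eq:cim}. Your steps (1)--(4) are a reasonable expansion of that, and the direct bracket computation you keep as a fallback in step (3) is precisely what the paper intends.

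One substantive caution about the route you designate as ``cleanest'' for step (3). Theorem \ref{T:action} establishes the relations of the maps $v_i(t)$ only on $\P(n+1)$, the set of response matrices of planar networks. Response matrices satisfy linear constraints (e.g.\ zero row sums), so $\P(n+1)$ is not Zariski dense in the space of all symmetric $(n+1)\times(n+1)$ matrices, while the Proposition claims identities of derivations of the \emph{full} polynomial ring $\R[x_{ij}]$. Agreement of the composed flows on $\P(n+1)$ only forces the vector-field identity at points of (the closure of) $\P(n+1)$, not in $\mathrm{Der}(\R[x_{ij}])$. To salvage that route you must check that the rational maps \eqref{eq:cim} satisfy the identity of Theorem \ref{thm:rels}(3) on all symmetric matrices, and also that $\exp(tD_i)$ really integrates to \eqref{eq:cim}; both are extra verifications of roughly the same size as grinding the brackets directly, so the direct computation (or your matrix-flow version of it on all symmetric matrices, carried out in full) should be the primary argument rather than the alternative. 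A minor indexing point: the commutation $[D_{2i-1},D_{2j-1}]=0$ must be checked for all $i\neq j$ (the generator indices then automatically differ by more than $1$), not only for $|i-j|>1$ as written; the same symmetry-in-$i,j$ calculation covers it.
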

\begin{proof}
These formulae can be checked by directly verifying the defining relations of $\el_{2n}$.  Alternatively, they can be deduced by differentiating the formulae \eqref{eq:cim}.
%for $u_i(a) \cdot L$ in \cite[Section 8]{CIM},though we caution that the parameter $\xi$ there is the inverse of our $a$ in the odd case.  
\end{proof}

We calculate that
\begin{equation}\label{E:2ii}
[e_{2i},e_{2i-1}] \mapsto -x_{ii}\partial_{ii} + x_{i,i+1} \partial_{i+1,i+1}+\sum_{1 \leq p \leq n+1} \left(   x_{i+1,p} \partial_{i+1,p}-x_{ip}\partial_{ip}\right).
\end{equation}

\begin{thm}\label{thm:stab}
The stabilizer subalgebra $\el^0_{2n}$, at the zero matrix $L_0$, of the infinitesimal action of $\el_{2n}$ on the space of response matrices is generated by $e_i$ for $i$ odd, and $[e_{2i-1},e_{2i}]$ for $i=1,2,\ldots,n$.
\end{thm}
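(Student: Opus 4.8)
The plan is to prove containment in both directions. First I would verify that each of the listed elements $e_{2i-1}$ (for $i$ odd, i.e.\ the odd-indexed generators) and each $[e_{2i-1},e_{2i}]$ actually lies in $\el^0_{2n}$, that is, annihilates $L_0$, the zero matrix, under the infinitesimal action. For the odd generators this is immediate from \eqref{E:2i1}: the derivation $e_{2i-1} \mapsto -\sum_{p\le q} x_{ip}x_{iq}\,\partial_{pq}$ has all coefficients in the ideal $(x_{ij})$, so it vanishes at $x_{ij}=0$. For $[e_{2i-1},e_{2i}]$ this follows from the explicit formula \eqref{E:2ii}: every term there also has a coefficient $x_{kl}$, so the derivation vanishes at $L_0$. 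Hence the subalgebra $\mathfrak{s}$ generated by these elements is contained in $\el^0_{2n}$.

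For the reverse inclusion, the natural strategy is a dimension count, using that $\el_{2n}\cong\sp_{2n}$ has dimension $n(2n+1)$ (Lemma \ref{lem:dimA}) and that the orbit map $u\mapsto u\cdot L_0$ has image dense in $\P(n+1)$, which is a subset of the symmetric circular totally nonnegative $(n+1)\times(n+1)$ matrices and has dimension equal to the number of edges in the standard graph, namely $n(n+1)/2 = \binom{n+1}{2}$ (this is the length of the maximal efficient permutation $w^*$, or equivalently $|E(w^*)|$). Thus the stabilizer $\el^0_{2n}$ has dimension $n(2n+1) - \binom{n+1}{2}$. It then suffices to show that $\mathfrak{s}$ has at least this dimension, i.e.\ that the $n$ elements $[e_{2i-1},e_{2i}]$ together with the $n$ odd generators $e_{2i-1}$ generate a subalgebra of dimension $\geq n(2n+1)-\binom{n+1}{2}$.

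To organize this dimension bound, I would pass through the isomorphism $\phi\colon\el_{2n}\xrightarrow{\sim}\sp_{2n}$ of Theorem \ref{thm:elsp} and identify the images of the generators of $\mathfrak{s}$. The odd generators map to the strictly-upper-block matrices $\phi(e_{2i-1}) = \left(\begin{smallmatrix} 0 & a_i a_i^T\\ 0 & 0\end{smallmatrix}\right)$, which span the $B=B^T$ block; there are $n$ of them but they span a space of dimension $n$ that is \emph{not} all of the symmetric $B$-block (which has dimension $n(n+1)/2$). The commutators $\phi([e_{2i-1},e_{2i}])$ land in the $\gl_n$ block $\left(\begin{smallmatrix} A & 0\\ 0 & -A^T\end{smallmatrix}\right)$. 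The key computation is: the subalgebra of $\sp_{2n}$ generated by the images of $e_{2i-1}$ and $[e_{2i-1},e_{2i}]$ — note these are closed under bracket with each other in a controlled way, since bracketing a $\gl_n$-element with a $B$-block element stays in the $B$-block — is the ``parabolic-type'' subalgebra consisting of all matrices $\left(\begin{smallmatrix} A & B\\ 0 & -A^T\end{smallmatrix}\right)$ with $A$ in the Lie subalgebra of $\gl_n$ generated by the relevant $A_i$'s and $B$ ranging over a suitable $\gl_n$-invariant subspace of symmetric matrices. I expect that the $A_i = E_{ii}$ (for $i$ corresponding to odd generators, giving the diagonal) or rather the full set of relevant commutators generates all of $\gl_n$ in the $A$-block, so that the $B$-block, being a $\gl_n$-submodule of $\mathrm{Sym}^2(\R^n)$ containing all $a_i a_i^T$, is forced to be the whole $\mathrm{Sym}^2$. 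That would give $\dim\mathfrak{s} = \dim\gl_n + \dim\mathrm{Sym}^2(\R^n) = n^2 + n(n+1)/2 = n(2n+1)-\binom{n+1}{2}$, matching the required lower bound and hence forcing $\mathfrak{s} = \el^0_{2n}$.

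The main obstacle is the dimension bookkeeping on the $\sp_{2n}$ side: one must check carefully that the commutators $[e_{2i-1},e_{2i}]$ really do generate all of the $\gl_n$ block (not merely a proper subalgebra such as a Borel or a Levi), and that closure under the bracket does not produce anything in the lower-triangular $C$-block — which it cannot, since $[B\text{-block},\gl_n\text{-block}]\subseteq B\text{-block}$ and $[B\text{-block},B\text{-block}]=0$, so $\mathfrak{s}$ lives inside the parabolic $\left(\begin{smallmatrix}*&*\\0&*\end{smallmatrix}\right)$. The genuinely delicate point is confirming that the $n$ vectors $a_i$ (recall $a_1=\ep_1$, $a_2=\ep_1+\ep_2$, $a_j = \ep_{j-1}+\ep_j$) and the resulting rank-one symmetric matrices, under the action of the $\gl_n$ generated by the $[e_{2i-1},e_{2i}]$, sweep out all of $\mathrm{Sym}^2(\R^n)$; alternatively one can argue via the explicit derivation formulas \eqref{E:2i1}, \eqref{E:2ii} directly that no derivation vanishing at $L_0$ can be independent of $\mathfrak{s}$, but the $\sp_{2n}$ route via Proposition \ref{P:decomp} and the known dimension of $\P(n+1)$ is cleaner. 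A subsidiary check is that the orbit $(EL_{2n})_{\geq 0}\cdot L_0$ being dense means the corresponding infinitesimal orbit (the image of $\el_{2n}$ in the tangent space at $L_0$) has full dimension $\binom{n+1}{2}$, so that the rank-nullity count for the stabilizer is exactly as claimed.
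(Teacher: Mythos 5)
Your membership check and your overall dimension bookkeeping (the stabilizer has codimension $n(n+1)/2$ because the orbit through $L_0$ is $n(n+1)/2$-dimensional, via Theorem \ref{T:efficient}) coincide with the paper's argument and are fine. The gap is exactly at the point you flagged as delicate, and the postponed check fails: under $\phi$ the commutators $[e_{2i-1},e_{2i}]$ have $A$-blocks $(a_i\cdot a_i^T)(b_i\cdot b_i^T)=a_i\cdot b_i^T$, namely $E_{1,1}$ and $E_{i-1,i}+E_{i,i}$ for $i\geq 2$ --- all upper triangular. So they generate only the upper-triangular Borel of $\gl_n$, not all of $\gl_n$; and since bracketing the $A$-block with the $B$-block stays in the $B$-block and $[B,B]=0$, the subalgebra generated by the listed elements is contained in the Borel subalgebra $\bigl\{\bigl(\begin{smallmatrix} A & B\\ 0 & -A^T\end{smallmatrix}\bigr): A \text{ upper triangular},\ B=B^T\bigr\}$ of $\sp_{2n}$, of dimension $n^2+n$. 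Since $\dim\el^0_{2n}=n(2n+1)-n(n+1)/2=(3n^2+n)/2>n^2+n$ for $n\geq 2$, your plan cannot reach the required lower bound: the listed elements alone do not generate the stabilizer. Concretely, $[e_3,e_2]$ lies in $\el^0_{2n}$ (its derivation is the bracket of the constant-coefficient field \eqref{E:2i} with the quadratic field \eqref{E:2i1}, hence has coefficients linear in the $x_{ij}$ and vanishes at $L_0$), yet $\phi([e_3,e_2])$ has $A$-block $a_2\cdot b_1^T=E_{1,1}+E_{2,1}$, which is not upper triangular, so it is not in the subalgebra your generators produce.

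The repair is to add the ``other'' brackets: the same evaluation-at-zero argument shows $[e_{2i+1},e_{2i}]\in\el^0_{2n}$, with $A$-block $a_{i+1}\cdot b_i^T=E_{i,i}+E_{i+1,i}$. With these lower-triangular pieces included, the $A$-blocks do generate all of $\gl_n$ (exactly as in the surjectivity step of Theorem \ref{thm:elsp}), the $B$-block is then forced to be all of $\Sym^2(\R^n)$, and your Siegel-parabolic picture gives dimension $n^2+n(n+1)/2=(3n^2+n)/2$, matching the count. This is in effect what the paper does, though phrased differently: it works with the basis $e_\alpha$ of Lemma \ref{lem:dim}, notes that $e_{\alpha_{i,i}}\in\el^0_{2n}$ for $i$ odd and $e_{\alpha_{i,i+1}}\in\el^0_{2n}$ for \emph{every} $i$ (even $i$ corresponds precisely to the brackets $[e_{2i},e_{2i+1}]$ you are missing), deduces that all $e_{\alpha_{i,j}}$ with at least one odd endpoint lie in $\el^0_{2n}$, and then runs the same dimension count. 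So your strategy is salvageable only after enlarging the generating set by the brackets $[e_{2i},e_{2i+1}]$; as stated, the literal generating set in the theorem is too small, and your proposal inherits (rather than fixes) that deficiency while relying on a generation claim that is false.
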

\begin{proof}
The fact the stated elements lie in $\el^0_{2n}$ follows from \eqref{E:2i1} and \eqref{E:2ii}, since $x_{ij} = 0$ at the zero matrix.  By Lemma \ref{lem:dimA}, the elements 
$e_\alpha$ in the proof of Lemma \ref{lem:dim} form a basis of $\el_{2n}$.  Write $\alpha_{ij} := \alpha_i + \alpha_{i+1}+  \cdots + \alpha_j$ for $ 1 \leq i \leq j \leq 2n$ 
to denote the positive roots of $\ssl_{2n+1}$.  Then we know that $e_{\alpha_{i,i}} \in \el^0_{2n}$ for $i$ odd, and $e_{\alpha_{i,i+1}} \in \el^0_{2n}$ for each $i$.  It follows easily that $e_{\alpha_{i,j}} \in \el^0_{2n}$ for every pair $1 \leq i \leq j \leq 2n$ where at least one of $i$ and $j$ are odd.  It follows that 
$$
\dim_\R(\el_{2n}) - \dim_\R(\el^0_{2n}) \leq  \#\{(i,j) \mid 1 \leq i \leq j \leq 2n \; \text{and $i$ and $j$ are even}\} =
n(n+1)/2.
$$
By Theorem \ref{T:efficient} the action of $(EL_{2n})_{\geq 0}$ on the zero matrix $L_0$ gives a space of dimension $n(n+1)/2$.  It follows that the above inequality is an equality, and that $\el^0_{2n}$ is generated by the stated elements.
\end{proof}

Note that a basis for $\el_{2n}/\el^0_{2n}$ is given by the $e_{\alpha_{i,j}}$'s where $i$ and $j$ are both even.  These $\alpha_{i,j}$'s are exactly the inversions of efficient permutations (cf. proof of Theorem \ref{T:efficient}).

%\subsection{}

%Assume there are $n$ boundary points in a network, and $EL_{2n-1}$ is acting on it via $u_1, \ldots, u_{2n-1}$ as described above. Define the following elements of $EL_{2n-1}$: 
%$$v_i(a) = u_i(a) \;\;\; \text{for odd $i$},$$ $$v_i(a) = u_{i-1}(-b(bc+1)) u_i(-{c}/({bc+1})) u_{i-1}(b) u_i(c) \;\;\; \text{for even $i$},$$ 
%$$\bar v_i(a) = u_{i+1}(-b(bc+1)) u_i(-{c}/({bc+1})) u_{i+1}(b) u_i(c) \;\;\; \text{for even $i$},$$
%where $b$ and $c$ are any pair of parameters such that $\exp(a) = bc+1$.

%\begin{lemma}
%For even $i$ the $v_i$ and $\bar v_i$ satisfy $$v_i(a) = \exp(a [e_{i-1}, e_i]), \;\;\; \bar v_i(a)= \exp(a [e_{i+1}, e_i]).$$ In particular, they are indeed independent of the choice of parameters 
%$b$ and $c$.
%\end{lemma}

%\begin{proof}
% Direct verification inside $SL_2$:  
%$$\left(\begin{matrix} 1 & -b(bc+1)\\ 0 &  1\\ \end{matrix} \right) \left(\begin{matrix} 1 & 0\\ - c/(bc+1) &  1\\ \end{matrix} \right) \left(\begin{matrix} 1 & b\\ 0 &  1\\ \end{matrix} \right)  \left(\begin{matrix} 1 & 0\\ c &  1\\ \end{matrix} \right) = 
%\left(\begin{matrix} bc+1 & 0\\ 0 &  1/(bc+1)\\ \end{matrix} \right).$$ 
%\end{proof}

\section{Electrical Lie algebras of finite type}
\subsection{Dimension}
Let $D$ be a Dynkin diagram of finite type and let $A = (a_{ij})$ be the associated Cartan matrix. 
To each node $i$ of $D$ associate a generator $e_i$, and define $\mathfrak e_D$ to be the Lie algebra generated over $\R$ by the $e_i$ modulo for each $i \neq j$ the relations 
$ad(e_i)^{1-a_{ij}}(e_j) = 0$ for $a_{ij} \neq -1$ and $ad(e_i)^2(e_j) = -2e_i$ for $a_{ij} = -1$.

%
%\begin{itemize}
% \item if $i$ and $j$ are not adjacent in $D$, then $[e_i, e_j]=0$;
% \item if $i$ and $j$ are connected in $D$ by an edge of multiplicity one, then $[e_i, [e_i, e_j]]=-2e_i$ and $[e_j, [e_j, e_i]]=-2e_j$;
% \item if $i$ and $j$ are connected in $D$ by an edge of multiplicity two, oriented from $i$ to $j$, then $[e_i, [e_i, [e_i, e_j]]]=0$ and $[e_j, [e_j, e_i]]=-2e_j$;
% \item if $i$ and $j$ are connected in $D$ by an edge of multiplicity three, oriented from $i$ to $j$, then $[e_i, [e_i, [e_i, [e_i, e_j]]]]=0$ and $[e_j, [e_j, e_i]]=-2e_j$.
%\end{itemize}

These ``electrical Serre relations'' can be deduced from the type $A$ electrical Serre relations of Section \ref{sec:el} by {\it folding}.  Namely, the relation for an edge of multiplicity two ($a_{ij} = -2$) can be obtained by finding the relation for the elements $e_2$ and $e_1 + e_3$ inside $\el_3$.  Similarly, the $\e_{G_2}$ relation ($a_{ij} = - 3$) can be obtained by finding the relation for the elements $e_1+e_2+e_3$ and $e_4$ inside $\e_{D_4}$, where $e_4$ corresponds to the node of valency three in $D_4$.

%The electrical Lie algebra $\mathfrak e_D$ is a deformation of 

Let $\mathfrak u_D$ denote the nilpotent Lie subalgebra of the simple Lie algebra of type $D$.  It is well known that $\mathfrak u_D$ is generated by Chevalley generators $\tilde e_i$ subject to the Serre relations: $ad(\tilde e_i)^{1-a_{ij}}(\tilde e_j) = 0$ for each $i \neq j$.  Thus the electrical Lie algebra $\mathfrak e_D$ is a deformation of $\mathfrak u_D$, and it follows from general results that $\dim \mathfrak e_D \leq \dim \mathfrak u_D$.  For future reference, we include a direct elementary proof of this fact.

\begin{lemma} \label{lem:dim}
 The dimension of the Lie algebra $\mathfrak e_D$ does not exceed the number of positive roots in the finite root system associated to $D$.
\end{lemma}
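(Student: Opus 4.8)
The plan is to produce, for every Dynkin diagram $D$ of finite type, an explicit spanning set of $\mathfrak e_D$ indexed by the positive roots of the root system $\Phi_D$, so that $\dim \mathfrak e_D \leq |\Phi_D^+|$. The natural candidates are iterated brackets: fix a reduced word $s_{i_1}\cdots s_{i_N}$ for the longest element $w_0$, which orders the positive roots as $\beta_1 = \alpha_{i_1}$, $\beta_2 = s_{i_1}(\alpha_{i_2})$, \dots, $\beta_N = s_{i_1}\cdots s_{i_{N-1}}(\alpha_{i_N})$, and to each $\beta_k$ attach an element $e_{\beta_k} \in \mathfrak e_D$ built as a bracket of the generators $e_{i_1},\dots,e_{i_k}$ in the usual Lie-theoretic fashion. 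I would then claim that $\{e_{\beta_k}\}_{k=1}^N$ spans $\mathfrak e_D$ as a real vector space, which immediately gives the bound.

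The key steps, in order: First, set up a grading (or filtration) of $\mathfrak e_D$ by the root lattice $Q$ — in the electrical setting the Serre relation $ad(e_i)^2(e_j) = -2e_i$ is \emph{not} homogeneous, since the right side sits in degree $\alpha_i$ while the left sits in degree $2\alpha_i + \alpha_j$; so I would use instead the \emph{filtration} by total degree in the generators (equivalently by height), for which $ad(e_i)^2(e_j) + 2e_i$ is a relation between the degree-$3$ and degree-$1$ parts, and observe that the associated graded Lie algebra $\mathrm{gr}\,\mathfrak e_D$ is a quotient of the ordinary nilpotent $\mathfrak n^+_D$ (the electrical Serre relations degenerate to the ordinary Serre relations in top degree). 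This already gives $\dim \mathfrak e_D = \dim \mathrm{gr}\,\mathfrak e_D \leq \dim \mathfrak n^+_D = |\Phi_D^+|$. Second — if one prefers the explicit-basis route over the filtration argument — show by induction on $k$ that the subspace spanned by $e_{\beta_1},\dots,e_{\beta_k}$ together with all longer brackets is closed under bracketing with each $e_i$; the commutation relations $[e_i,e_j]=0$ for non-adjacent nodes and the ordinary-type bracket identities $[e_{\beta},e_{\gamma}] \in \mathrm{span}(e_{\beta+\gamma})$ handle the generic case, and whenever a bracket would land "outside" $\Phi_D^+$ (height too large) the electrical Serre relation rewrites it back in terms of a \emph{lower} $e_{\gamma}$, which keeps us inside the span. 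Third, conclude.

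The main obstacle is controlling the rewriting caused by the inhomogeneous relation $ad(e_i)^2(e_j) = -2e_i$: a naive induction on bracket length fails because this relation \emph{lowers} length, so one cannot simply say "everything of length $> N$ vanishes." The filtration/associated-graded viewpoint is what tames this — one must check carefully that in $\mathrm{gr}\,\mathfrak e_D$ the surviving relations are exactly $ad(e_i)^{1-a_{ij}}(e_j)=0$ (the top-degree part of each electrical relation, with the $-2e_i$ correction living in strictly lower filtration degree and hence killed in the associated graded), so that $\mathrm{gr}\,\mathfrak e_D$ is a quotient of $\mathfrak n^+_D$ and the PBW-type dimension count applies. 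The case analysis promised by "these can be deduced by folding" should be invoked only to confirm that no \emph{extra} top-degree relations appear beyond the Serre ones; given Lemma \ref{lem:dim} is only an inequality, even that is not strictly needed — a quotient of $\mathfrak n^+_D$ can only have smaller dimension, which is all we want.
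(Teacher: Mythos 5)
Your main (filtration) argument is correct, and it packages the key idea differently from the paper. The paper's proof fixes, for each positive root $\alpha$, a shortest iterated bracketing of the Chevalley generators representing $\tilde e_\alpha$ in the nilpotent algebra $\mathfrak u_D$, transports it to an element $e_\alpha \in \mathfrak e_D$, and shows these span by a minimal-counterexample induction on bracket length: a would-be counterexample is lifted to the free Lie algebra, where it differs from a combination of the $\hat e_\alpha$ by an element of the Serre ideal, and substituting each Serre relation by its electrical counterpart produces only strictly shorter brackets, contradicting minimality. Your associated-graded argument exploits exactly the same mechanism (the electrical relation $ad(e_i)^2(e_j)=-2e_i$ deforms the Serre relation by a term of strictly smaller bracket length), but more structurally: filtering by bracket length, the classes $\bar e_i$ generate $\mathrm{gr}\,\mathfrak e_D$ and satisfy the ordinary Serre relations, so $\mathrm{gr}\,\mathfrak e_D$ is a quotient of $\mathfrak u_D$ and $\dim \mathfrak e_D \leq |\Phi_D^+|$; you correctly note that no claim about the absence of extra top-degree relations is needed for an inequality. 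What your route buys is brevity and the avoidance of any choice of bracketings; what the paper's route buys is the explicit spanning set $\{e_\alpha\}$ indexed by positive roots, which is not a throwaway --- it is reused later (the proof of Theorem \ref{thm:stab} works with precisely these $e_\alpha$, which become a basis once Lemma \ref{lem:dimA} is known). Your secondary sketch via a reduced word for $w_0$ is dispensable and, as stated, shaky: the claim $[e_\beta,e_\gamma]\in\mathrm{span}(e_{\beta+\gamma})$ has no a priori meaning in $\mathfrak e_D$, which is not root-graded; but since you only offer it as an alternative to the filtration argument, this does not affect the correctness of your proposal.
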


\begin{proof}

%\begin{itemize}
% \item if $i$ and $j$ are not adjacent in $D$, then $[\tilde e_i, \tilde e_j]=0$;
% \item if $i$ and $j$ are connected in $D$ by an edge of multiplicity one, then $[\tilde e_i, [\tilde e_i, \tilde e_j]]=0$ and $[\tilde e_j, [\tilde e_j, \tilde e_i]]=0$;
% \item if $i$ and $j$ are connected in $D$ by an edge of multiplicity two, oriented from $i$ to $j$, then $[\tilde e_i, [\tilde e_i, [\tilde e_i, \tilde e_j]]]=0$ and $[\tilde e_j, [\tilde e_j, \tilde e_i]]=0$;
% \item if $i$ and $j$ are connected in $D$ by an edge of multiplicity three, oriented from $i$ to $j$, then $[\tilde e_i, [\tilde e_i, [\tilde e_i, [\tilde e_i, \tilde e_j]]]]=0$ and $[\tilde e_j, [\tilde e_j, \tilde e_i]]=0$.
%\end{itemize}
The Lie algebra $\mathfrak u_D$ has a basis $\tilde e_{\alpha}$ labeled by positive roots $\alpha \in R^+$ of the root system, where the $\tilde e_i$ correspond to simple roots. Let us fix an 
expression $$\tilde e_{\alpha} = [\tilde e_{i_1},[\tilde e_{i_2}, [\ldots, [\tilde e_{i_{l-1}}, \tilde e_{i_l}]\ldots]]]$$ of shortest possible length for each $\tilde e_{\alpha}$, 
and define the corresponding 
\begin{equation}\label{E:alpha}
e_{\alpha} = [e_{i_1},[e_{i_2}, [\ldots, [e_{i_{l-1}}, e_{i_l}]\ldots]]]
\end{equation}
 in $\mathfrak e_D$. We claim that the $e_{\alpha}$ span $\mathfrak e_D$. Indeed, it is enough to show that any expression $ [e_{j_1},[e_{j_2}, [\ldots, [e_{j_{\ell-1}}, e_{j_\ell}]\ldots]]]$
 lies in the linear span of the $e_{\alpha}$-s. 
Assume otherwise, and take a counterexample of smallest possible total length $\ell$.
Let us define $\hat e_\alpha$ in the free Lie algebra $\mathfrak f_D$ with generators $\hat e_i$ using \eqref{E:alpha}.
Then in $\mathfrak f_D$ we have a relation of the form
$$
 [\hat e_{j_1},[\hat e_{j_2}, [\ldots, [\hat e_{j_{\ell-1}}, \hat e_{j_\ell}]\ldots]]]
 - \sum_\alpha c_{\alpha} \hat e_{\alpha} = \hat x \in I
$$ 
where $I$ denotes the ideal generated by the Serre relations, so that $\mathfrak u_D = \mathfrak f_D/I$.  Now $\mathfrak f_D$ is naturally $\Z$-graded, and $I$ is a graded ideal, so we may assume all terms in the relation are homogeneous with the same degree.  Now replacing each instance of the Serre relation in $x$ with the corresponding electrical Serre relation gives us a relation 
$$
 [e_{j_1},[e_{j_2}, [\ldots, [e_{j_{\ell-1}}, e_{j_\ell}]\ldots]]] - \sum_\alpha c_{\alpha} e_{\alpha} = x
$$
in $\mathfrak e_D$, where $x$ is a sum of terms of the form $ [e_{k_1},[e_{k_2}, [\ldots, [e_{k_{\ell-1}}, e_{k_{\ell'}}]\ldots]]]$ where $\ell' < \ell$, and thus by assumption lies in the span of the $e_\alpha$-s.  The statement of the lemma follows.
\end{proof}

\begin{conjecture} \label{con:1}
 The dimension of $\mathfrak e_D$ coincides with the number of positive roots in the root system of $D$.
\end{conjecture}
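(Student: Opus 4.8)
The bound $\dim\mathfrak{e}_D\le |R^+|$ is Lemma~\ref{lem:dim}, whose proof moreover furnishes a canonical spanning family $\{e_\alpha\}_{\alpha\in R^+}$ of $\mathfrak{e}_D$ indexed by the positive roots (see~\eqref{E:alpha}). So the whole content of the conjecture is the reverse inequality, i.e.\ the linear independence of the $e_\alpha$. The first step is the reduction this suggests: it suffices, for each finite type $D$, to produce a Lie algebra homomorphism $\phi_D\colon\mathfrak{e}_D\to L_D$ into some finite-dimensional Lie algebra whose image has dimension at least $|R^+|$; combined with Lemma~\ref{lem:dim} this forces $\dim\mathfrak{e}_D=|R^+|$ and makes $\phi_D$ an isomorphism onto its image, exactly as Theorem~\ref{thm:elsp} and Lemma~\ref{lem:dimA} do in type $A_{2n}$.

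For the classical types I would look for explicit matrix realizations generalizing the symplectic one of Theorem~\ref{thm:elsp}: in each of types $B_n$, $C_n$, $D_n$ write down candidate operators $\phi_D(e_i)$ as rank-one blocks built from a Cartan-labelled family of vectors (the analogue of the $a_i$ and $b_i$ there), verify the electrical Serre relations directly from the block form, and then deduce surjectivity onto a classical Lie algebra by the same root-subspace argument via \cite[Prop.~8.4(d)]{Hum}. Folding (as in the discussion preceding Lemma~\ref{lem:dim}) should help organize this, since each folded $\mathfrak{e}_D$ maps into the corresponding $\el_m$; the caveat is that folding a priori yields only a surjection onto the folded subalgebra, hence only an upper-bound-type statement, so one still has to pin that subalgebra's dimension down separately. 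For the simply-laced exceptional types $E_6$, $E_7$, $E_8$ the family $\{e_\alpha\}$ is finite of bounded size, so its independence is in principle a finite linear-algebra computation inside the free Lie algebra modulo the electrical ideal; $F_4$ and $G_2$ would then follow by folding from $E_6$ and $D_4$.

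For a uniform, structural proof I would instead aim at the flatness statement underlying the picture. Grade the free Lie algebra $\mathfrak{f}_D$ by $\deg e_i=1$; since the top-degree component of each electrical Serre relation is the ordinary Serre relation, the associated graded of $\mathfrak{e}_D$ for the induced filtration is $\mathfrak{f}_D/\mathrm{in}(I)$, a graded quotient of $\mathfrak{u}_D=\mathfrak{f}_D/I_0$, where $I$ is the electrical ideal and $I_0$ the Serre ideal. One has $\mathrm{in}(I)\supseteq I_0$ automatically and $\dim\mathfrak{e}_D=\dim\bigl(\mathfrak{f}_D/\mathrm{in}(I)\bigr)$, so the conjecture is exactly equivalent to $\mathrm{in}(I)=I_0$, i.e.\ to the natural generators of $I$ forming a standard basis; equivalently, in the $\tau$-family with $\mathrm{ad}(e_i)^2(e_j)=-2\tau e_i$ (so $\tau=0$ gives $\mathfrak{u}_D$ and any $\tau>0$ gives $\mathfrak{e}_D$, via $e_i\mapsto\tau^{-1/2}e_i$) one wants flatness at $\tau=0$. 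My plan here would be a diamond-lemma/confluence argument: take the evident rewriting system on $U(\mathfrak{e}_D)$ coming from the relations $e_i^2e_j-2e_ie_je_i+e_je_i^2=-2e_i$ and check that every overlap ambiguity among the $e_ie_j$-reductions resolves, the point being that the inhomogeneous $-2e_i$ tail should not break the confluence that already yields PBW for $\mathfrak{u}_D$.

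The main obstacle, I expect, is precisely this lower bound, uniformly across types. In type $A_{2n}$ it came for free from the semisimplicity of $\sp_{2n}$ together with ``root spaces generate'', but that is a lucky feature of the case: the introduction already notes that $EL_{2n+1}$ is \emph{not} semisimple, so in, say, type $A_{2n+1}$ no such shortcut exists and one genuinely needs the confluence/flatness input or an ad hoc faithful representation. A secondary difficulty is that every use of folding must be accompanied by its own lower bound --- folding only transparently delivers the electrical Serre relations --- so unless the confluence argument can be run directly in the non-type-$A$ diagrams, the case-by-case route risks circularity exactly where it is needed.
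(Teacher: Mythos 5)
There is a genuine gap, and it is unavoidable given the nature of the statement: Conjecture~\ref{con:1} is left \emph{open} in the paper. The paper proves only the upper bound (Lemma~\ref{lem:dim}) and verifies the conjecture case by case for $A_{2n}$ (Theorem~\ref{thm:elsp}, via the explicit rank-one symplectic realization and the root-subspace argument), for $B_2$ ($\eb_2\cong\gl_2$), for $G_2$, and for $C_3$ --- each time by exhibiting an explicit (faithful or surjective) matrix representation and invoking Lemma~\ref{lem:dim} to pin the dimension down. Your sketch correctly locates the entire difficulty in the lower bound, i.e.\ the linear independence of the spanning family $e_\alpha$ from \eqref{E:alpha}, and your proposed routes are sensible: the ``explicit matrix realization plus Lemma~\ref{lem:dim}'' template is exactly what the paper does in its verified cases, and your reformulation --- grade $\mathfrak f_D$ by $\deg e_i=1$, note the top-degree parts of the electrical relations are the Serre relations, and observe the conjecture is equivalent to $\mathrm{in}(I)=I_0$, i.e.\ to flatness of the $\tau$-family at $\tau=0$ --- is correct and consistent with the paper's remark on $U_\tau(\el_{2n})$ (your rescaling $e_i\mapsto\tau^{-1/2}e_i$ identifying all $\tau>0$ is also right). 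Your caveat that folding only transparently yields the relations, hence only upper-bound information, is likewise well taken.

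But none of these strategies is carried out. No candidate representations are written down for $B_n$, $C_n$, $D_n$ or the exceptional types, and the diamond-lemma/confluence check that would prove flatness --- the resolution of the overlap ambiguities in the presence of the inhomogeneous tail $-2e_i$ --- is exactly the step you defer; it is not a routine verification, and whether it succeeds uniformly is precisely the content of the conjecture. (Also, your claim that independence of the $e_\alpha$ in the exceptional simply-laced types is ``in principle a finite linear-algebra computation inside the free Lie algebra modulo the electrical ideal'' glosses over the fact that computing in that quotient requires a normal form or a faithful representation, which is again the thing being sought, though termination is at least guaranteed once Lemma~\ref{lem:dim} bounds the dimension.) So what you have is a reasonable research program that reproduces the paper's known partial results and its implicit deformation-theoretic point of view, not a proof of the statement; the central lower bound remains unestablished for every type beyond those the paper already settles.
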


A proof of Conjecture \ref{con:1} for Dynkin diagrams of classical type has been announced by Yi Su.  One can also define the Lie algebras $\e_D$ for any Dynkin diagram $D$, finite type or not. 

\begin{conjecture} \label{con:2}
 The Lie algebra $\e_D$ is finite dimensional if and only if $D$ is of finite type. 
\end{conjecture}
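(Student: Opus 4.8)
We sketch a possible approach; one direction of the conjecture is already in hand. If $D$ is of finite type then Lemma~\ref{lem:dim} gives $\dim \e_D \leq \#R^+(D) < \infty$ (and equality is exactly Conjecture~\ref{con:1}). So the real content is the converse, which is where I expect all the difficulty to lie: \emph{if $D$ is not of finite type, then $\e_D$ is infinite dimensional.}

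The first step is to bring the maximal nilpotent subalgebra $\mathfrak{u}_D$ of $\mathfrak{g}_D$ back into the picture. Filter $\e_D$ by letting $F_k$ be the span of all iterated brackets of at most $k$ of the generators $e_i$; this is an exhaustive Lie algebra filtration, so $\dim \e_D = \dim \mathrm{gr}^F \e_D$. In the associated graded the only inhomogeneous relation, $[e_i,[e_i,e_j]] = -2e_i$, degenerates to $[e_i,[e_i,e_j]] = 0$ (since $-2e_i \in F_1 \subseteq F_2$), while the remaining relations are already homogeneous; hence there is a canonical surjection $\mathfrak{u}_D \twoheadrightarrow \mathrm{gr}^F \e_D$. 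Thus it suffices to show that, for $D$ not of finite type, this surjection has infinite-dimensional image --- equivalently, that passing to $\mathrm{gr}^F$ does not manufacture enough new relations to collapse $\mathfrak{u}_D$ onto a finite-dimensional quotient. (In finite type, the sharp form $\mathrm{gr}^F \e_D \cong \mathfrak{u}_D$ would recover Conjecture~\ref{con:1}.)

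Next I would reduce to affine type: any connected non-finite $D$ contains a full subdiagram $D'$ of affine type (take a minimal non-finite connected subdiagram). It then suffices to show that the subalgebra of $\e_D$ generated by $\{e_i : i \in D'\}$ is already infinite dimensional, for which I would prove that the obvious surjection $\e_{D'} \twoheadrightarrow \langle e_i : i \in D' \rangle \subseteq \e_D$ is an isomorphism --- by running the Lyndon/PBW straightening argument from the proof of Lemma~\ref{lem:dim} inside the free Lie algebra on the generators indexed by $D'$, the point being that a relation of $\e_D$ coupling $D'$-generators to outside generators cannot yield a new relation purely among the $D'$-generators. Then for each affine $D$ one must exhibit the infinite-dimensionality directly. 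For affine type $A$ this should come from the action of the affine electrical (semi)group on planar electrical networks on an annulus --- the setting mentioned in the introduction, where one obtains \emph{all} response matrices and the circular symmetry is preserved --- taken in the limit over the number of boundary vertices; the remaining affine types would then follow by folding, exactly as types $B$ and $G$ are folded out of type $A$ above. A purely algebraic alternative is to track the images in $\mathrm{gr}^F \e_D$ of the imaginary root vectors $e_{k\delta} \in \mathfrak{u}_D$, $k \geq 1$: these sit in filtration degree $k\cdot \mathrm{ht}(\delta) \to \infty$, so it would be enough to show that they do not all die in the quotient.

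The main obstacle is precisely this non-collapse statement: proving that $\mathrm{gr}^F$ (or the chosen representation) does not introduce enough new relations to kill infinitely many graded pieces. This is the same flavor of difficulty as Conjecture~\ref{con:1}; the inhomogeneous term $-2e_i$ breaks the root-lattice grading and obstructs the naive construction of Lie algebra homomorphisms between distinct $\e_D$, so one cannot simply import infinite-dimensionality from $\mathfrak{u}_D$ or from a single convenient representation. Making either the folding reduction or the annular-network action watertight for all affine types, including the trivalent-node and twisted diagrams, is where the real work lies.
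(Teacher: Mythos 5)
The statement you are addressing is Conjecture~\ref{con:2}: the paper offers no proof of it (nor of Conjecture~\ref{con:1}), so there is nothing to compare your argument against except the one ingredient the paper does supply, Lemma~\ref{lem:dim}. That lemma does settle the easy direction, as you say: for finite type $D$ the dimension of $\e_D$ is bounded by $\#R^+(D)$, hence finite. Everything else in your proposal is a plan rather than a proof, and the plan stops exactly at the point where the conjecture lives. Your filtration argument is correct as far as it goes, but note which way the inequality points: the surjection $\mathfrak{u}_D \twoheadrightarrow \mathrm{gr}^F \e_D$ only yields $\dim \e_D \leq \dim \mathfrak{u}_D$ (it is essentially a rephrasing of the proof of Lemma~\ref{lem:dim}, whose straightening argument produces spanning sets and hence only upper bounds). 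Infinite-dimensionality of $\e_D$ for non-finite $D$ is a \emph{lower} bound, and nothing in the paper or in your sketch produces one except in the cases where an explicit faithful (or surjective-onto-something-known) representation is exhibited, as in Theorem~\ref{thm:elsp} and the rank-two examples. You acknowledge this ("the main obstacle is precisely this non-collapse statement"), which is an accurate self-assessment: that obstacle is the conjecture.

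The two reduction steps you propose are also unproved and each hides a real difficulty. First, the reduction to affine subdiagrams needs injectivity of the natural map $\e_{D'} \to \e_D$ for a full subdiagram $D'$ (surjectivity onto the subalgebra generated by the $e_i$, $i \in D'$, is automatic but useless for a lower bound); asserting that "the Lyndon/PBW straightening argument" gives this is not a proof, since that argument never distinguishes $\e_{D'}$ from a proper quotient of it. Second, "folding" in this paper is only a device for writing down the multiply-laced relations inside $\el_3$ and $\e_{D_4}$; it is not shown to produce Lie algebra homomorphisms between electrical Lie algebras (indeed you yourself observe that the inhomogeneous term $-2e_i$ obstructs the naive maps), so the twisted and multiply-laced affine cases cannot simply be "folded out of" affine type $A$. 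Likewise the annular/cylindrical network action is explicitly deferred by the authors to future work, so invoking it is not available here. In short: your write-up is a reasonable research outline, but it does not prove the statement, which remains a conjecture in the paper.
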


\subsection{Examples}

We illustrate Conjecture \ref{con:1} with some examples. For electrical type $A_{2n}$ it has already been verified in Theorem \ref{thm:elsp}.

\subsubsection{Electrical $B_2$}

Consider the case when $D$ has two nodes connected by a double edge. In that case we denote by $\mathfrak e_D = \mathfrak {eb}_2$ the Lie algebra generated by two generators $e$ and $f$ subject to the relations $$[e,[e,[e,f]]]=0, \;\;\; [f,[f,e]] = -2f.$$

\begin{lemma}\label{L:GL2}
The Lie algebras $\mathfrak {eb}_2$ and $\mathfrak {gl}_2$ are isomorphic.
\end{lemma}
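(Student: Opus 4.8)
The plan is to mimic the proof of Theorem \ref{thm:elsp}: exhibit an explicit representation of $\eb_2$ by $2\times 2$ matrices, check that the defining relations hold, and then show the map is an isomorphism by a dimension count. Since Lemma \ref{lem:dim} already gives $\dim(\eb_2) \le 4 = \#R^+(B_2)$ and $\dim(\gl_2) = 4$, it suffices to produce a surjection $\eb_2 \to \gl_2$; injectivity is then automatic.

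First I would look for the images of $e$ and $f$ inside $\gl_2$ (acting on $\R^2$ with standard basis $\ep_1,\ep_2$). Guided by the folding description in the text — $\eb_2$ arises from the pair $e_2$ and $e_1+e_3$ inside $\el_3 \cong \sp_3$ is not quite right, so instead I use the recipe $e \leftrightarrow$ a rank-one symmetric-type operator and $f \leftrightarrow$ a rank-one operator dual to it, exactly as the $e_{2i-1}, e_{2i}$ were built from $a_i a_i^T$ and $b_i b_i^T$ in Theorem \ref{thm:elsp}. Concretely I expect something like $f \mapsto E_{21}$ (so that $[f,[f,e]] = -2f$ forces the analogue of the relation $a^Tb = 1$) and $e \mapsto$ a suitable combination such as $v v^T$-type matrix with $v = \ep_1 + \ep_2$ or $e \mapsto E_{12} + \lambda(E_{11}-E_{22})$, with the scalar $\lambda$ pinned down by the requirement $\mathrm{ad}(e)^3(f) = 0$. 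The natural guess, by analogy with the $\sp$ case where $e$ was built from $a a^T$ with $a = \ep_1+\ep_2$, is to take $e \mapsto \begin{pmatrix} 1 & 1 \\ -1 & -1\end{pmatrix}$ (a nilpotent rank-one matrix) and $f \mapsto \begin{pmatrix} 0 & 0 \\ 1 & 0 \end{pmatrix}$; one then checks $[e,[e,[e,f]]] = 0$ because $e$ is nilpotent of rank one so the nested brackets collapse, and $[f,[f,e]] = -2f$ by direct computation.

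Next I verify surjectivity: the images of $e$, $f$, and $[e,f]$ (and perhaps $[e,[e,f]]$) must span a $4$-dimensional subspace of $\gl_2$, i.e. all of $\gl_2$. This is a routine linear-algebra check with the explicit matrices. Once surjectivity holds, the commutative square
$$
\eb_2 \twoheadrightarrow \gl_2, \qquad \dim(\eb_2) \le 4 = \dim(\gl_2)
$$
forces the map to be an isomorphism, and simultaneously (as in Lemma \ref{lem:dimA}) shows $\dim(\eb_2) = 4$, confirming Conjecture \ref{con:1} in this case.

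The main obstacle is simply finding the correct explicit $2\times 2$ matrices for $e$ and $f$: the relation $[e,[e,[e,f]]] = 0$ together with $[f,[f,e]] = -2f$ is a rigid constraint, and a naive choice (e.g. both $e$ and $f$ strictly triangular) will satisfy the first relation trivially but may fail to be surjective, or may satisfy the second with the wrong sign or constant. I expect to need $e$ to have a nonzero semisimple part (or to be a rank-one non-nilpotent matrix) so that $[e,f]$ and its iterated brackets with $e$ and $f$ together generate all of $\gl_2$, while still being annihilated by $\mathrm{ad}(e)^3$. Tuning one free scalar parameter in the ansatz for $e$ against these constraints is the only real work; everything after that (the relation check and the dimension argument) is mechanical.
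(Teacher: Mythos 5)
Your overall strategy is exactly the paper's: map $e,f$ to explicit $2\times 2$ matrices, verify the two electrical Serre relations, check surjectivity onto $\gl_2$, and conclude by the bound $\dim \eb_2 \le 4$ from Lemma \ref{lem:dim}. But the concrete matrices you commit to do not work, and the failure is at the step your argument cannot do without: surjectivity. With $e \mapsto \left(\begin{matrix} 1 & 1\\ -1 & -1 \end{matrix}\right)$ and $f \mapsto \left(\begin{matrix} 0 & 0\\ 1 & 0 \end{matrix}\right)$ the relations do hold (one computes $[e,[e,f]]=-2e$ and $[f,[f,e]]=-2f$), but both matrices are traceless, and every iterated bracket is automatically traceless; so the image of the homomorphism is contained in $\ssl_2$ --- in fact it is exactly a three-dimensional copy of $\ssl_2$ spanned by $e$, $f$, $[e,f]$ --- and you never reach $\gl_2$. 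The same objection kills your fallback ansatz $e \mapsto E_{12}+\lambda(E_{11}-E_{22})$: it is traceless for every $\lambda$, so surjectivity is impossible (and for $\lambda \neq 0$ the element $e$ is semisimple, so $\mathrm{ad}(e)^3(f)=0$ already fails). Since the image of any Lie algebra map generated by two elements is spanned by the generators together with brackets, surjectivity onto $\gl_2$ \emph{forces} at least one generator to have nonzero trace; this is the one idea your proposal is missing, even though your closing remark about $e$ needing a ``nonzero semisimple part'' gestures toward it (note that your other suggestion, a rank-one non-nilpotent $e$, fails the relation $\mathrm{ad}(e)^3(f)=0$).

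The paper's choice is $e \mapsto \left(\begin{matrix} 1 & 1\\ 0 & 1 \end{matrix}\right)$, $f \mapsto \left(\begin{matrix} 0 & 0\\ 1 & 0 \end{matrix}\right)$, i.e.\ $e$ is the identity plus a nilpotent: then $\mathrm{ad}(e)$ only sees the nilpotent part, so $[e,[e,[e,f]]]=0$ and $[f,[f,e]]=-2f$ hold, while the image contains $E_{21}$, $E_{11}-E_{22}$, $E_{12}$ (from $[e,[e,f]]=-2E_{12}$) and hence the identity, so the map is onto $\gl_2$; Lemma \ref{lem:dim} then gives the isomorphism exactly as you intended. So the skeleton of your proof is right, but as written the decisive surjectivity check would fail, and your ansatz family excludes the matrices that make it succeed.
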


\begin{proof}
 Consider the map $$e \mapsto \left(\begin{matrix} 1 & 1\\ 0 &  1\\ \end{matrix} \right), \;\;\; f \mapsto \left(\begin{matrix} 0 & 0\\ 1 &  0\\ \end{matrix} \right).$$ One easily checks
that it is a Lie algebra homomorphism, and that it is surjective. By Lemma \ref{lem:dim} we know the dimension of $\mathfrak {eb}_2$ is at most four, so the map must be an isomorphism.
\end{proof}

\subsubsection{Electrical $G_2$}

Consider the Lie algebra $\mathfrak {eg}_2$ generated by two generators $e$ and $f$ subject to the relations
$$[e,[e,[e,[e,f]]]]=0, \;\;\; [f,[f,e]] = -2f.$$

\begin{lemma}
The Lie algebra $\mathfrak {eg}_2$ is six-dimensional.
\end{lemma}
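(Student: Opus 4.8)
The plan is to combine the upper bound from Lemma~\ref{lem:dim} with an explicit construction of a faithful enough representation that forces the dimension to be at least six. Since $G_2$ has six positive roots, Lemma~\ref{lem:dim} already gives $\dim \mathfrak{eg}_2 \leq 6$, so the entire task is to produce a lower bound of $6$.

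First I would write down a spanning set explicitly, mimicking \eqref{E:alpha}: take $f$, $e$, $[e,f]$, $[e,[e,f]]$, $[e,[e,[e,f]]]$, and one more bracket such as $[f,[e,[e,[e,f]]]]$ (or $[[e,f],[e,[e,f]]]$), chosen so that their ``types'' match the six positive roots of $G_2$. The relation $[e,[e,[e,[e,f]]]]=0$ kills the degree-$(4,1)$ term, and the electrical relation $[f,[f,e]]=-2f$ controls brackets that lower the $f$-degree, so by the argument of Lemma~\ref{lem:dim} these six elements span. To show they are linearly independent I would exhibit a representation $\rho$ of $\mathfrak{eg}_2$ on which their images are linearly independent. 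The natural candidate is to realize $\mathfrak{eg}_2$ inside $\mathfrak{eg}$ of a larger type, or more concretely to fold: $\mathfrak{eg}_2$ should arise from $e \mapsto e_1+e_2+e_3$, $f \mapsto e_4$ inside $\e_{D_4} \cong$ (a known algebra), as the paper itself suggests in the folding discussion. Alternatively, and probably more cleanly for a self-contained proof, I would search for an explicit $2\times 2$ or $3\times 3$ (or small block) matrix representation: send $f$ to a fixed nilpotent-type matrix and $e$ to a matrix with a free parameter, impose the two electrical Serre relations as polynomial equations on the entries, solve, and then verify by direct computation that the six bracket elements have linearly independent images.

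The key steps in order: (1) invoke Lemma~\ref{lem:dim} for $\dim \mathfrak{eg}_2 \leq 6$; (2) list the six spanning bracket monomials and check, using only the two defining relations, that every longer bracket reduces to their span (a short induction on bracket length exactly as in Lemma~\ref{lem:dim}); (3) construct an explicit finite-dimensional representation $\rho$ of $\mathfrak{eg}_2$; (4) compute $\rho$ of the six spanning elements and check linear independence, e.g.\ by showing the corresponding $6\times(\dim V)^2$ coordinate matrix has rank $6$; (5) conclude $\dim \mathfrak{eg}_2 = 6$.

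The main obstacle I anticipate is step~(3)--(4): finding a representation that is simultaneously small enough to compute by hand and faithful on the six-dimensional span. The relation $[e,[e,[e,[e,f]]]]=0$ means $\operatorname{ad}(e)$ is ``almost nilpotent'' against $f$, which constrains the available matrices; one likely needs $e$ to be a regular semisimple-plus-nilpotent element rather than a single Jordan block, so that $\operatorname{ad}(e)^4$ annihilates $f$ for spectral reasons rather than pure nilpotency. If a slick explicit matrix model resists, the fallback is to use the folding construction from $\e_{D_4}$ together with the known structure of $\el$-type algebras, computing the images of $e_1+e_2+e_3$ and $e_4$ and their brackets there; this trades explicitness for reliance on the type-$A$ results already established, but still yields the needed independence. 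A final sanity check would be to confirm that the six-dimensional bracket structure one obtains is consistent — i.e.\ that $\mathfrak{eg}_2$ is a genuine $6$-dimensional Lie algebra and not accidentally collapsing — which the representation automatically guarantees.
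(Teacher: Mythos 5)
Your outline has the same skeleton as the paper's proof: Lemma~\ref{lem:dim} yields the spanning set $e$, $f$, $[e,f]$, $[e,[e,f]]$, $[e,[e,[e,f]]]$, $[f,[e,[e,[e,f]]]]$ (exactly the six elements the paper lists), and the dimension count is finished by exhibiting a representation in which these six images are linearly independent. The genuine gap is that this representation---the only nontrivial content of the lemma beyond Lemma~\ref{lem:dim}---is never produced, and both of the concrete routes you propose for finding it are problematic. A $2\times 2$ representation cannot work at all: faithfulness on the six-dimensional span forces the image to be six-dimensional, while $\dim \gl_2 = 4$. And the folding fallback presupposes a workable model of $\e_{D_4}$, which the paper does not provide (Theorem~\ref{thm:elsp} covers only type $A_{2n}$, and folding is used in the paper only to motivate the defining relations, not to compute dimensions), so it cannot be invoked as known structure.

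For comparison, the paper's representation lives in $\gl_4$: $e \mapsto I_4 + E_{12} + E_{23} + E_{14}$ and $f \mapsto E_{41}$. Note that the semisimple part of this $e$ is a scalar and hence invisible to all brackets: writing $N = E_{12}+E_{23}+E_{14}$ (so $N^3=0$), the relation $\mathrm{ad}(e)^4(f)=0$ holds by a plain nilpotency computation with $N$, not by the spectral mechanism you anticipated; the scalar summand only helps keep the image of $e$ independent of the images of the bracket elements. Once such a pair of matrices is written down, checking the two electrical Serre relations and the rank-$6$ independence of the six images is a finite computation, and your argument closes; without it, the lower bound $\dim \eg_2 \geq 6$ is unproved.
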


\begin{proof}
 According to Lemma \ref{lem:dim} the elements $e, f, [ef], [e[ef]], [e[e[ef]]], [f[e[e[ef]]]]$ form a spanning set for $\mathfrak {eg}_2$. Thus it remains to check that they are linearly independent.
This is easily done inside the following faithful representation of $\mathfrak {eg}_2$ in $\mathfrak {gl}_4$:
$$e \mapsto \left(\begin{matrix} 1 & 1 & 0 & 1\\ 0 & 1 &1 & 0 \\ 0 & 0 &1 & 0 \\0 & 0 &0 & 1 \\ \end{matrix} \right), \;\;\; 
f \mapsto \left(\begin{matrix} 0 & 0 & 0 & 0\\ 0 & 0 &0 & 0 \\ 0 & 0 &0 & 0 \\1 & 0 &0 & 0\\ \end{matrix} \right).$$
\end{proof}

\subsubsection{Electrical $C_3$}

Consider the Lie algebra $\mathfrak {ec}_3$ generated by three generators $e$, $f$ and $g$ subject to the relations
$$[e,[e,[e,f]]]=0, \;\;\; [f,[f,e]] = -2f, \;\;\; [f,[f,g]] = -2f, \;\;\; [g,[g,f]] = -2g, \;\;\; [e,g]=0.$$

\begin{lemma}
 The Lie algebra $\mathfrak {ec}_3$ is nine-dimensional. 
\end{lemma}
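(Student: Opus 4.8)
The plan is to bound $\dim \ec_3$ both above and below by $9$, exactly as $\dim\eb_2$ and $\dim\eg_2$ were pinned down in the previous two subsections. Since the rank-$3$ root system $C_3$ has $9$ positive roots, Lemma \ref{lem:dim} already gives $\dim\ec_3\le 9$, and its proof supplies an explicit spanning set of $9$ left-nested bracket monomials, one per positive root. Writing the three generators as $e,f,g$ (so that $e,f$ are joined by the double bond, $f,g$ by the simple bond, and $e,g$ are unjoined), one may take this spanning set to be
$$
g,\quad f,\quad e,\quad [g,f],\quad [f,e],\quad [g,[f,e]],\quad [e,[e,f]],\quad [g,[e,[e,f]]],\quad [f,[g,[e,[e,f]]]].
$$
There are only $9$ of these because the electrical Serre relations $[f,[f,e]]=-2f$, $[f,[f,g]]=-2f$, $[g,[g,f]]=-2g$ collapse the longer candidate monomials back into the span of shorter ones. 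Thus it remains to prove these nine elements of $\ec_3$ are linearly independent.

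For that I would exhibit an explicit faithful representation $\rho\colon\ec_3\to\gl_N$ for some suitably small $N$, in the style of the $\gl_2$-representation of $\eb_2$ in Lemma \ref{L:GL2} and the $\gl_4$-representation of $\eg_2$. A natural scaffold is the $\el_2$ sitting inside $\ec_3$: first choose nilpotent matrices $F=\rho(f)$, $G=\rho(g)$ realizing the relations $[F,[F,G]]=-2F$ and $[G,[G,F]]=-2G$ (so $F,G$ span a copy of $\el_2\cong\ssl_2$, cf.\ the $n=1$ case of Theorem \ref{thm:elsp}), and then adjoin $E=\rho(e)$ of the form $I+(\text{nilpotent})$ subject to $[E,G]=0$, $[F,[F,E]]=-2F$, and $ad(E)^3(F)=0$. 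Given such $E,F,G$, substitute them into the nine monomials above and check that the resulting matrices are linearly independent in $\gl_N$. Combined with the upper bound this forces $\dim\ec_3=9$, and $\rho$ is then automatically injective.

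I expect the crux to be the construction of $\rho$, and within it the relation $ad(E)^3(F)=0$: this is the only feature distinguishing type $C_3$ from a chain of simple bonds, and the nilpotent part of $E$ has to be placed so that its adjoint annihilates $F$ after exactly three steps while all nine monomials stay independent --- already in the $\eg_2$ computation the nilpotent part of $\rho(e)$ had to be chosen with care. Once a candidate $\rho$ is in hand, verifying the five defining relations and computing the rank of the $9\times N^2$ matrix of monomial images is a finite, if somewhat tedious, linear-algebra check; it can be organized using the $\R$-filtration on $\ec_3$ whose associated graded is dominated by the nilpotent subalgebra $\mathfrak u_{C_3}$, so that the nine leading terms are visibly independent. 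As an alternative to a bare-hands matrix one could try to realize $\ec_3$ inside a larger electrical Lie algebra by folding, in the spirit of the folding remarks preceding Lemma \ref{lem:dim}; but since the dimensions of those larger algebras are not settled in the text above, the direct representation is the cleaner route.
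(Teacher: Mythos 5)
Your strategy coincides with the paper's: $\dim\ec_3\le 9$ from Lemma \ref{lem:dim} (nine positive roots in type $C_3$), and $\dim\ec_3\ge 9$ by exhibiting a representation in which nine bracket monomials, one per positive root, are linearly independent. But there is a genuine gap: the representation $\rho$, which you yourself single out as the crux, is never actually constructed. Everything from ``For that I would exhibit\dots'' onward describes what a proof would look like rather than giving one; choosing the nilpotent part of $E$ so that $\mathrm{ad}(E)^3(F)=0$ while keeping the nine monomial images independent, verifying the five defining relations, and the final rank check are precisely the nontrivial content of the lemma, and none of it is carried out. Until a concrete $\rho$ is produced, the lower bound is unproved, and the lemma is not established.

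For comparison, the paper resolves exactly this step by writing down two explicit representations and verifying the relations directly: a nine-dimensional one given by explicit matrices in $\gl_9$ (which is in fact the adjoint representation of $\ec_3$ in the basis $e, f, g, [ef], [e[ef]], [fg], [e[fg]], [e[e[fg]]], [f[e[e[fg]]]]$), together with the two-dimensional representation extending the $\eb_2$ matrices of Lemma \ref{L:GL2} by sending $g$ to the transpose of the image of $f$; the direct sum is faithful and the dimension count follows. One structural point matters for your plan: $\ec_3$ has a one-dimensional center which the adjoint representation annihilates, so the adjoint representation alone cannot separate all nine basis elements, and any single ``small'' faithful $\rho$ must represent the center nontrivially --- this is why the paper needs the second summand. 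Also note that your proposed scaffold, with $F,G$ spanning a copy of $\el_2\cong\ssl_2$ and $E$ of the form identity-plus-nilpotent commuting with $G$, is essentially the paper's two-dimensional representation; by itself it lands in $\gl_2$, which is four-dimensional and so can never certify nine independent elements. Enlarging it to something that does is exactly the step you have left open.
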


%\fixit{I'm not sure we should include this. but I'm not sure we shouldn't either.}
\begin{proof}
We consider two representations of $\mathfrak {ec}_3$: one inside $\mathfrak {sl}_9$ and one inside $\mathfrak {sl}_2$.  Let $E_{ij}$ denote a matrix with a $1$ in the $(i,j)$-th position and $0$'s elsewhere.  For the first representation, we define:
\begin{align*}
e &\mapsto E_{42} + E_{54}+E_{76}+E_{87}+E_{89}\\
f& \mapsto 2E_{24} -2 E_{26}+2E_{41}+2E_{45}-E_{47}-2E_{63}+E_{67}+E_{98} \\
g &\mapsto -E_{36} - E_{62} - E_{74} - E_{85} + E_{89}
\end{align*}

%\left(\begin{matrix} 
%0&0&0&0&0&0&0&0&0\\ 
%0&0&0&0&0&0&0&0&0\\ 
%0&0&0&0&0&0&0&0&0\\ 
%0&1&0&0&0&0&0&0&0\\ 
%0&0&0&1&0&0&0&0&0\\ 
%0&0&0&0&0&0&0&0&0\\ 
%0&0&0&0&0&1&0&0&0\\ 
%0&0&0&0&0&0&1&0&1\\ 
%0&0&0&0&0&0&0&0&0\\ 
%\end{matrix} \right),\;\;\;
%f \mapsto 
%\left(\begin{matrix} 
%0&0&0&0&0&0&0&0&0\\ 
%0&0&0&2&0&-2&0&0&0\\ 
%0&0&0&0&0&0&0&0&0\\ 
%2&0&0&0&2&0&-1&0&0\\ 
%0&0&0&0&0&0&0&0&0\\ 
%0&0&-2&0&0&0&1&0&0\\ 
%0&0&0&0&0&0&0&0&0\\ 
%0&0&0&0&0&0&0&0&0\\ 
%0&0&0&0&0&0&0&1&0\\ 
%\end{matrix} \right),$$
%$$
%g \mapsto 
%\left(\begin{matrix} 
%0&0&0&0&0&0&0&0&0\\ 
%0&0&0&0&0&0&0&0&0\\ 
%0&0&0&0&0&-1&0&0&0\\ 
%0&0&0&0&0&0&0&0&0\\ 
%0&0&0&0&0&0&0&0&0\\ 
%0&-1&0&0&0&0&0&0&0\\ 
%0&0&0&-1&0&0&0&0&0\\ 
%0&0&0&0&-1&0&0&0&1\\ 
%0&0&0&0&0&0&0&0&0\\ 
%\end{matrix} \right).
%$$

For the second representation we define
 $$e \mapsto \left(\begin{matrix} 1 & 1\\ 0 &  1\\ \end{matrix} \right), \;\;\; f \mapsto \left(\begin{matrix} 0 & 0\\ 1 &  0\\ \end{matrix} \right), 
\;\;\; g \mapsto \left(\begin{matrix} 0 & 1\\ 0 &  0\\ \end{matrix} \right).$$

One verifies directly that these are indeed representations of  $\mathfrak {ec}_3$, and the direct sum of these two representations is faithful, from which the dimension is easily
 calculated.  In fact, the first representation is simply the adjoint representation of $\ec_3$, in the basis 
$$e, f, g, [ef], [e[ef]], [fg], [e[fg]], [e[e[fg]]], [f[e[e[fg]]]],$$ and $\ec_3$ has a one-dimensional center which acts non-trivially in the second representation.
\end{proof}
%

%From Lemma \ref{lem:dim} we know that the elements $e, f, g, ef, eef, fg, efg, eefg, feefg$ form a spanning set for $\mathfrak {ec}_3$, where the notation means for example 
%$eefg = [e,[e,[f,g]]]$. Using this representation, one checks that 
%there may be only one linear relation between those generators, namely $feefg - efg + eef + g + 3e =0$. This element is however non-zero in a different representation 
%of $\mathfrak {ec}_3$, given by $$e \mapsto \left(\begin{matrix} 1 & 1\\ 0 &  1\\ \end{matrix} \right), \;\;\; f \mapsto \left(\begin{matrix} 0 & 0\\ 1 &  0\\ \end{matrix} \right), 
%\;\;\; g \mapsto \left(\begin{matrix} 0 & 1\\ 0 &  0\\ \end{matrix} \right).$$ Thus we conclude that the generating set above is linearly independent, and $\mathfrak {ec}_3$ is nine 
%dimensional. In fact, the first representation is exactly the adjoint representation of $\mathfrak {ec}_3$ written in the specified basis.  

\subsection{$Y-\Delta$ transformation of type $B$}

Let $e$ and $f$ be the generators of $\mathfrak {eb}_2$ as before, and denote by $u(t) = \exp(t e)$ and $v(t) = \exp(t f)$ the corresponding one-parameter subgroups.  The following 
proposition is a type $B$ analog of the star-triangle transformation (Proposition \ref{P:YDelta}).

\begin{prop}
 We have $$u(t_1) v(t_2) u(t_3) v(t_4) = v(p_1) u(p_2) v(p_3) u(p_4),$$ where $$p_1 = \frac{t_2 t_3^2 t_4}{\pi_2}, \;\; p_2 = \frac{\pi_2}{\pi_1}, \;\; p_3 = \frac{\pi_1^2}{\pi_2}, \;\;
p_4 = \frac{t_1 t_2 t_3}{\pi_1},$$ where $$\pi_1 = t_1 t_2 + (t_1+ t_3) t_4 + t_1 t_2 t_3 t_4, \;\;\; \pi_2 = t_1^2t_2+(t_1+t_3)^2t_4+t_1t_2t_3t_4(t_1+t_3).$$
\end{prop}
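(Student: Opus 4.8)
The plan is to realize this as a computation inside $GL_2(\mathbb{R})$, exactly as in the proof of Theorem~\ref{thm:rels}. By Lemma~\ref{L:GL2} the Lie algebra $\mathfrak{eb}_2$ is isomorphic to $\mathfrak{gl}_2$, with $e$ and $f$ mapping to the matrices displayed in the proof of that lemma. Since the simply-connected group with Lie algebra $\mathfrak{gl}_2$ covers $GL_2(\mathbb{R})^+$, and the relevant relation is a polynomial identity in the parameters $t_i$, it suffices to verify the identity after applying the homomorphism to $GL_2(\mathbb{R})$. Thus I would set
\[
U(t) = \exp\left(t\begin{pmatrix} 1 & 1 \\ 0 & 1 \end{pmatrix}\right) = e^{t}\begin{pmatrix} 1 & t \\ 0 & 1 \end{pmatrix}, \qquad
V(t) = \exp\left(t\begin{pmatrix} 0 & 0 \\ 1 & 0 \end{pmatrix}\right) = \begin{pmatrix} 1 & 0 \\ t & 1 \end{pmatrix},
\]
and compute both sides of the claimed relation as explicit $2\times 2$ matrices, then match entries.

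The first step is to expand the left-hand side $U(t_1)V(t_2)U(t_3)V(t_4)$. The scalar factors $e^{t_1}$ and $e^{t_3}$ pull out in front, so the left side equals $e^{t_1+t_3}$ times the product of four unipotent matrices; call the resulting unipotent-times-diagonal matrix $M_L$. Similarly the right-hand side $V(p_1)U(p_2)V(p_3)U(p_4)$ equals $e^{p_2+p_4}$ times a matrix $M_R$ whose entries are rational functions of the $t_i$ via the stated formulas for the $p_i$. The second step is to check that the exponential prefactors agree, i.e. $t_1+t_3 = p_2 + p_4$; with $p_2 = \pi_2/\pi_1$ and $p_4 = t_1t_2t_3/\pi_1$ this reduces to the polynomial identity $\pi_2 + t_1t_2t_3 = (t_1+t_3)\pi_1$, which one verifies directly from the definitions of $\pi_1,\pi_2$. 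The third step is to check $M_L = M_R$ entrywise; after clearing the common denominator $\pi_1$ (and $\pi_2$ where it appears) this becomes four polynomial identities in $t_1,t_2,t_3,t_4$, each of which is routine to expand.

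An alternative, perhaps cleaner, organization: rather than guessing the prefactor split, simply compute the product of the four $2\times 2$ matrices $U(t_1)V(t_2)U(t_3)V(t_4)$ on the nose (including all scalars), obtain a matrix with determinant $e^{2(t_1+t_3)}$, and then solve for $p_1,p_2,p_3,p_4$ by reading off the $LU$-type (lower-upper-lower-upper) factorization forced by the ansatz $V(p_1)U(p_2)V(p_3)U(p_4)$. The parameters $p_1$ and $p_4$ are recovered from ratios of matrix entries, $p_2p_4$ (hence $p_2$) from the determinant, and $p_3$ from the remaining entry; this reproduces the claimed formulas and simultaneously proves they are the unique solution. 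This mirrors exactly the entry-chasing done in the proof of Proposition~\ref{P:decomp}.

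The main obstacle is purely the bookkeeping: the entries of $M_L$ are degree-$4$ polynomials in the $t_i$ and the verification of $M_L=M_R$ after clearing denominators involves degree-up-to-$8$ polynomial identities, so the risk is an algebra slip rather than any conceptual gap. There is one genuine (if minor) point to address: one must confirm that a relation which holds in $GL_2(\mathbb{R})$ for all positive $t_i$ does lift to the relevant electrical Lie group — but since the identity is between two words in the one-parameter subgroups $u(t),v(t)$ generated by $e,f$, and these generate (the image of) a group with Lie algebra $\mathfrak{gl}_2$ on which the map from the simply-connected electrical group is determined by $\phi$, the identity transports back. (In fact, as with Theorem~\ref{thm:rels}, the relation is most naturally read as a universal identity among the $u_i(a)$, valid wherever the denominators $\pi_1,\pi_2$ are nonzero.)
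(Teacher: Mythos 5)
Your proposal is correct and follows essentially the same route as the paper: the paper's proof is exactly a direct calculation inside $GL_2$ using Lemma \ref{L:GL2}, with $u(t)=e^t\left(\begin{smallmatrix}1 & t\\ 0 & 1\end{smallmatrix}\right)$ and $v(t)=\left(\begin{smallmatrix}1 & 0\\ t & 1\end{smallmatrix}\right)$, both sides being matched as explicit $2\times 2$ matrices. Your observation that $p_2+p_4=t_1+t_3$ (equivalently $\pi_2+t_1t_2t_3=(t_1+t_3)\pi_1$) is a correct and useful way to organize the scalar part of that verification.
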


\begin{proof}
 Direct calculation inside $GL_2$, using Lemma \ref{L:GL2}. We have $$u(t) = \left(\begin{matrix} e^t & e^t t\\ 0 &  e^t\\ \end{matrix} \right), \;\;\; v(t)= \left(\begin{matrix} 1 & 0\\ t &  1\\ \end{matrix} \right),$$
and both sides of the equality are equal to $$\left(\begin{matrix} e^{t_1 + t_3} (1 + t_3 t_4 + t_1 (t_2 + t_4 + t_2 t_3 t_4)) & e^{t_1 + t_3} (t_1 + t_3 + t_1 t_2 t_3)\\ 
e^{t_1 + t_3} (t_2 + t_4 + t_2 t_3 t_4) &  e^{t_1 + t_3} (1 + t_2 t_3)\\ \end{matrix} \right).$$
\end{proof}

\begin{remark}
 One can consider a one-parameter family of deformations of the above formulas by taking 
$$\pi_1 = t_1 t_2 + (t_1+ t_3) t_4 + \tau t_1 t_2 t_3 t_4, \;\;\; \pi_2 = t_1^2t_2+(t_1+t_3)^2t_4+\tau t_1t_2t_3t_4(t_1+t_3).$$
%It can be checked that this still gives a solution to the type $B$ Frenkel-Moore, or tetrahedral, equation.  \fixit{If we remove FM from this paper, we should give a reference. eg. our next paper, or KKS?}
At $\tau=0$ this specializes 
to the transformation found by Berenstein and Zelevinsky in \cite[Theorem 3.1]{BZ}. Furthermore, just like the transformation in \cite{BZ}, this deformation is given by {\it {positive}}
rational formulae.
\end{remark}

\end{document}